\theoremstyle{plain}
\newtheorem{theorem}{Theorem}
\newtheorem{lemma}{Lemma}
\newtheorem{corollary}{Corollary}
\theoremstyle{definition}
\newtheorem{definition}{Definition}
\newtheorem{example}{Example}
\theoremstyle{remark}
\newtheorem{remark}{Remark}
\DeclareMathOperator{\co}{co}
\DeclareMathOperator{\xL}{\textrm{L}}
\DeclareMathOperator{\dist}{dist}
\DeclareMathOperator*{\esssup}{ess\,sup}
\DeclareMathOperator{\domain}{dom}
\DeclareMathOperator{\graph}{Graph}
\author{M.V. Dolgopolik}
\title{Codifferentials and Quasidifferentials of the~Expectation of Nonsmooth Random Integrands and 
Two-Stage Stochastic Programming}
\begin{document}

\maketitle

\begin{abstract}
This work is devoted to an analysis of exact penalty functions and optimality conditions for nonsmooth two-stage
stochastic programming problems. To this end, we first study the co-/quasi-differentiability of the expectation of
nonsmooth random integrands and obtain explicit formulae for its co- and quasidifferential under some natural
assumptions on the integrand. Then we analyse exact penalty functions for a variational reformulation of
two-stage stochastic programming problems and obtain sufficient conditions for the global exactness of these functions
with two different penalty terms. In the end of the paper, we combine our results on the co-/quasi-differentiability of 
the expectation of nonsmooth random integrands and exact penalty functions to derive optimality conditions for nonsmooth
two-stage stochastic programming problems in terms of co\-dif\-fe\-rentials.
\end{abstract}

\section{Introduction}

Two-stage stochastic programming is one of the basic problems of stochastic optimization
\cite{ShapiroDentchevaRuszczynski,BirgeLouveaux} that has multiple applications in various fields, including
transportation planning \cite{BarbarosogluArda,LiuFanOrdonez}, disaster management \cite{GrassFischer}, optimal design
of energy systems \cite{ZhouZhangEtAl}, resources management \cite{HuangLoucks}, etc. Although two-stage stochastic
programming problems can be viewed as stochastic versions of bilevel optimization problems
\cite{DempeKalashnikov,DempeZemkoho}, their stochastic nature requires a largely different approach to their solution.
Optimality conditions for two-stage stochastic programming problems were obtained in 
\cite{RockafellarWets75,HiriartUrruty,XuYe10,ShapiroDentchevaRuszczynski,Vogel}, while
numerical methods for solving various classes of two-stage stochastic programming problems were studied e.g. in
\cite{ShapiroHomemDeMello,OliveiraSagastizabal,FabianSaoke,LeoveyRomisch} (see also the references therein).

The need for computing convex or nonconvex subdifferentials of the expectation of nonsmooth random integrands arises in
many areas of stochastic optimization, including two-stage stochastic programming, as well as stochastic linear
complementarity problems \cite{ChenFukushima}, stochastic variational inequalities \cite{ChenWetsZhang}, etc. The
subdifferential in the sense of convex analysis of the expectation of a convex integrand was computed in
\cite{RockafellarWets82}, while its approximations were discussed in \cite{NemirovskiJuditskyLanShapiro}. Various
approximations of the Clarke subdifferential of the expectation of nonsmooth random integrands were studied in
\cite{XuZhang09,BurkeChenSun}, while an outer estimate of its Mordukhovich basic subdifferential was obtained in
\cite{XuYe10}. Finally, a quasidifferential of the expectation of quasidifferentiable random integrands was computed in 
\cite{LinHuangXiaLi}.

The main goal of this paper is to apply constructive nonsmooth analysis
\cite{DemyanovDixon,DemyanovRubinov,DemyanovRubinov_collect} to a theoretical analysis of nonsmooth two-stage
stochastic programming problems. Firstly, we analyse the codifferentiability and quasidifferentiability of the
expectation of nonsmooth random integrands and present explicit formulae for its codifferential and quasidifferential in
the more general case and under different assumptions than in \cite{LinHuangXiaLi} (see
Remark~\ref{rmrk:DifferentQuasidiff} for more details).

In the second part of the paper we study exact penalty functions for two-stage stochastic programming problems,
reformulated as equivalent variational problems with pointwise constraints. With the use of the general theory of exact
penalty functions \cite{DiPilloGrippo,RubinovYang,Zaslavski,Dolgopolik_ExPenFunc,DolgopolikFominyh,Demyanov2010}, we
obtain sufficient conditions for the global exactness of penalty functions for two-stage stochastic programming with
two different types of penalty terms. The use of penalty terms of the first type leads to much less restrictive
assumptions on constraints of the second stage problem, while the second type of penalty terms is more convenient for
applications. In particular, it allows one to reformulate two-stage stochastic programming problems, whose second stage
problem has DC (Difference-of-Convex) objective function and DC constraints, as equivalent unconstrained DC optimization
problems and apply the well-developed apparatus of DC optimization to find their solutions (cf. analogous results for
bilevel programming problems in~\cite{StrekalovskyOrlov,Orlov}). Let us also note that exact penalty functions for
single-stage stochastic programming were analysed in \cite{FlamZowe}.

Finally, in the end of the paper we combine our results on quasidifferentials of the expectation of nonsmooth random
integrands and exact penalty functions for two-stage stochastic programming problems to obtains necessary optimality
conditions for these problems in terms of codifferentials.

The paper is organised as follows. Some auxiliary definitions and facts from constructive nonsmooth analysis, that are
necessary for understanding the paper, are collected in Section~\ref{sect:Preliminaries}. Codifferentiability and
quasidifferentiability of the expectation of nonsmooth random integrands is studied in
Section~\ref{sect:CodiffExpectation}, while Section~\ref{sect:TwoStageProgramming} is devoted to nonsmooth two-stage
stochastic programming problems. Exact penalty functions for such problems are analysed in
Subsection~\ref{subsect:ExactPenalty}, while optimality conditions for these problems in terms of codifferentials are
derived in Subsection~\ref{subsect:OptimalityConditions}.

\section{Preliminaries}
\label{sect:Preliminaries}

Let us introduce the notation and briefly recall several definitions from nonsmooth analysis that will be used
throughout the article. For more details in the finite dimensional case see
\cite{DemyanovDixon,DemyanovRubinov,DemyanovRubinov_collect}. The infinite dimensional case was studied in 
\cite{Dolgopolik_CodiffCalc,Dolgopolik_AbstractCodiff,Dolgopolik_MCD,DolgopolikMV_ESAIM}.

Let $X$ be a real Banach space. Denote by $X^*$ its topological dual, and by $\langle \cdot, \cdot \rangle$ the duality
pairing between $X$ and $X^*$. The weak${}^*$ topology on $X^*$ is denoted by $w^*$ or $\sigma(X^*, X)$ depending on
the context. Denote also by $\tau_{\mathbb{R}}$ the canonical topology of the real line $\mathbb{R}$. Let finally 
$U \subset X$ be an open set.

\begin{definition} \label{def:Codifferential}
A function $f \colon U \to \mathbb{R}$ is called \textit{codifferentiable} at a point $x \in U$, if there exists a pair
of convex subsets $\underline{d} f(x), \overline{d} f(x) \subset \mathbb{R} \times X^*$ that are compact in 
the topological product $(\mathbb{R} \times X^*, \tau_{\mathbb{R}} \times w^*)$, satisfy the equality
\begin{equation} \label{eq:CodiffZeroAtZero}
  \max_{(a, x^*) \in \underline{d} f(x)} a = \min_{(b, y^*) \in \overline{d} f(x)} b = 0,
\end{equation}
and for any $\Delta x \in X$ satisfy the following condition:
\begin{align*}
  \lim_{\alpha \to +0} \frac{1}{\alpha} \Big| f(x + \alpha \Delta x) - f(x) 
  &- \max_{(a, x^*) \in \underline{d} f(x)} \big( a + \langle x^*, \alpha \Delta x \rangle \big)
  \\
  &- \min_{(b, y^*) \in \overline{d} f(x)} \big( b + \langle y^*, \alpha \Delta x \rangle \big) \Big| = 0
\end{align*}
The pair $D f(x) = [\underline{d} f(x), \overline{d} f(x)]$ is called a \textit{codifferential} of $f$ at $x$, the set
$\underline{d} f(x)$ is referred to as a \textit{hypodifferential} of $f$ at $x$, while the set $\overline{d} f(x)$ is
called a \textit{hyperdifferential} of $f$ at $x$.
\end{definition}

\begin{remark}
{(i)~In the case when $X = \mathbb{R}^d$, a codifferential $D f(x)$ is a pair of convex compact subsets of
$\mathbb{R} \times \mathbb{R}^d = \mathbb{R}^{d + 1}$ satisfying the equalities from the previous definition. In
addition, if $X$ is a Hilbert space, then it is natural to suppose that a codifferential $D f(x)$ is a pair of convex
weakly compact subsets of the space $\mathbb{R} \times X$.
}

\noindent{(ii)~Note that a codifferential is not uniquely defined. In particular, one can easily verify that for any
compact convex subset $C$ of the space $(\mathbb{R} \times X^*, \tau_{\mathbb{R}} \times w^*)$ the pair 
$[ \underline{d} f(x) + C, \overline{d} f(x) - C ]$ is a codifferential of $f$ at $x$ as well.
}
\end{remark}

\begin{definition}
A function $f \colon U \to \mathbb{R}$ is called \textit{continuously} codifferentiable at a point $x \in U$, if $f$ is
codifferentiable at every point in a neighbourhood of $x$ and there exists a codifferential mapping 
$D f(\cdot) = [\underline{d} f(\cdot), \overline{d} f(\cdot)]$, defined in a neighbourhood of $x$ and such that the
multifunctions $\underline{d} f(\cdot)$ and $\overline{d} f(\cdot)$ are continuous in Hausdorff metric at $x$.
\end{definition}

The class of continuously codifferentiable at a given point (or on a given set) functions is closed under addition,
multiplication, composition with continuously differentiable functions, as well as pointwise maximum and minimum of
finite families of functions. Moreover, any convex function is continuously codifferentiable in a neighbourhood of any
given point from the interior of its effective domain, and any DC function (i.e. a function that can be represented as
the difference of convex functions) is continuously codifferentiable in a neighbourhood of any given point. Numerous
examples of continuously codifferentiable functions, as well as main rules of codifferential calculus can be found in
\cite{DemyanovDixon,DemyanovRubinov,DemyanovRubinov_collect,Dolgopolik_AbstractCodiff,Dolgopolik_MCD}.

\begin{definition} \label{def:Quasidifferential}
A function $f \colon U \to \mathbb{R}$ is called \textit{quasidifferentiable} at a point $x \in U$, if $f$ is
directionally differentiable at $x$ and its directional derivative $f'(x, \cdot)$ at this point can be represented as
the difference of sublinear functions or, equivalently, if there exists a pair 
$\underline{\partial} f(x), \overline{\partial} f(x) \subset X^*$ of compact weak${}^*$ compact sets such that
\[
  f'(x, h) = \max_{x^* \in \underline{\partial} f(x)} \langle x^*, h \rangle 
  + \min_{y^* \in \overline{\partial} f(x)} \langle y^*, h \rangle  \quad \forall h \in X.
\]
The pair $\mathscr{D} f(x) = [ \underline{\partial} f(x), \overline{\partial} f(x)]$ is called a
\textit{quasidifferential} of $f$ at $x$, the set $\underline{\partial} f(x)$ is called a \textit{subdifferential} of
$f$ at $x$, while the set $\overline{\partial} f(x)$ is referred to as a \textit{superdifferential} of $f$ at $x$.
\end{definition}

Just like codifferential, a quasidifferential is not uniquely defined. Here we only mention that a function $f$ is
codifferentiable at a point $x$ iff $f$ is quasidifferentiable at $x$ and one can easily compute a quasidifferential of
$f$ at $x$ from its codifferential at this point and vice versa. Namely, if $D f(x)$ is a codifferential of $f$ at $x$,
then the pair $\mathscr{D} f(x) = [\underline{\partial} f(x), \overline{\partial} f(x)]$ with
\begin{equation} \label{eq:QuasidiffViaCodiff}
  \underline{\partial} f(x) = \Big\{ x^* \in X^* \Bigm| (0, x^*) \in \underline{d} f(x) \Big\}, \quad
  \overline{\partial} f(x) = \Big\{ y^* \in X^* \Bigm| (0, y^*) \in \overline{d} f(x) \Big\}
\end{equation}
is a quasidifferential of $g$ at $x$. Conversely, if $\mathscr{D} f(x)$ is a quasidifferential of $f$ at $x$, then the
pair $[\{ 0 \} \times \underline{\partial} f(x), \{ 0 \} \times \overline{\partial} f(x)]$ is a codifferential of $f$
at $x$ (see, e.g. \cite{DemyanovRubinov,Dolgopolik_MCD}). Below we consider only quasidifferentials of the form 
\eqref{eq:QuasidiffViaCodiff}, that is, we suppose that if a codifferentiable function $f$ and its codifferential
$Df(x)$ are given, then $\mathscr{D} f(x)$ is a quasidifferential of $f$ of the form \eqref{eq:QuasidiffViaCodiff}.

Let us finally recall one auxiliary definition from set-valued analysis that will be used later (see, e.g.
\cite[Sect.~8.2]{AubinFrankowska} for more details). Let $X$ and $Y$ be metric spaces and $(\Omega, \mathfrak{A}, \mu)$
be a measure space. A set-valued mapping $F \colon X \times \Omega \rightrightarrows Y$, $F = F(x, \omega)$ is called
\textit{a Carath\'{e}odory map}, if for every $x \in X$ the multifunction $F(x, \cdot)$ is measurable and for a.e.
$\omega \in \Omega$ the multifunction $F(\cdot, \omega)$ is continuous.

\section{Codifferentials of the Expectation of Nonsmooth Random Integrands}
\label{sect:CodiffExpectation}

Let $(\Omega, \mathfrak{A}, P)$ be a probability space, and suppose that a nonsmooth function
$f \colon \mathbb{R}^d \times \mathbb{R}^m \times \Omega \to \mathbb{R}$, $f = f(x, y, \omega)$ is given.
In this section we study the codifferentiability of the nonsmooth integral functional
\[
  \mathcal{I}(x, y) = \mathbb{E}\big[ f(x, y(\cdot), \cdot) \big]
  := \int_{\Omega} f(x, y(\omega), \omega) \, d P(\omega),
\]
where $x \in \mathbb{R}^d$ is a parameter and $y \in \xL^p(\Omega, \mathfrak{A}, P; \mathbb{R}^m)$ with 
$1 < p \le + \infty$ is an $m$-dimensional random vector. Although the case $p = 1$ can be included into the general
theory under some additional assumptions, we exclude it for the sake of simplicity, since the proofs of the main results
below are much more cumbersome in the case $p = 1$, than in the case $1 < p \le + \infty$.

Denote by $p' \in [1, + \infty)$ the conjugate exponent of $p$, i.e. $1/p + 1/p' = 1$, and let $|\cdot|$ be 
the Euclidean norm in $\mathbb{R}^n$. Let us impose some assumptions on the integrand $f$ that, as we will show below,
ensure that the functional $\mathcal{I}$ is correctly defined and codifferentiable.

Namely, we will suppose that for a.e. $\omega \in \Omega$ and for all $(x, y) \in \mathbb{R}^d \times \mathbb{R}^m$
the function $f$ is codifferentiable jointly in $x$ and $y$, that is, there exists a pair of compact convex sets
$\underline{d}_{x, y} f(x, y, \omega), \overline{d}_{x, y} f(x, y, \omega) \subset 
\mathbb{R} \times \mathbb{R}^d \times \mathbb{R}^m$ such that
\[ 
  \Phi_f(x, y, \omega; 0, 0) = \Psi_f(x, y, \omega; 0, 0) = 0,
\]
and for all $(\Delta x, \Delta y) \in \mathbb{R}^d \times \mathbb{R}^m$ one has
\begin{multline*}
  \lim_{\alpha \to + 0} \frac{1}{\alpha} \Big| f(x + \alpha \Delta x, y + \alpha \Delta y, \omega) - f(x, y, \omega)
  \\
  - \Phi_f(x, y, \omega; \alpha \Delta x, \alpha \Delta y)
  - \Psi_f(x, y, \omega; \alpha \Delta x, \alpha \Delta y) \Big| = 0,
\end{multline*}
where
\begin{equation} \label{eq:CodiffPrimal}
\begin{split}
  \Phi_f(x, y, \omega; \Delta x, \Delta y) 
  &= \max_{(a, v_x, v_y) \in \underline{d}_{x, y} f(x, y, \omega)} 
  \big( a + \langle v_x, \Delta x \rangle + \langle v_y, \Delta y \rangle \big)
  \\
  \Psi_f(x, y, \omega; \Delta x, \Delta y)
  &= \min_{(b, w_x, w_y) \in \overline{d}_{x, y} f(x, y, \omega)}
  \big( b + \langle w_x, \Delta x \rangle + \langle w_y, \Delta y \rangle \big).
\end{split}
\end{equation}
The pair $D_{x, y} f(x, y, \omega) = [\underline{d}_{x, y} f(x, y, \omega), \overline{d}_{x, y} f(x, y, \omega)]$ is
called a codifferential of $f$ in $(x, y)$.

\vspace{3mm}

\noindent\textbf{Assumption~1.} The function $f$ satisfies the following conditions:
\begin{enumerate}
\item{for any $x \in \mathbb{R}^d$ the map $(y, \omega) \mapsto f(x, y, \omega)$ is a Carath\'{e}odory function;}

\item{the function $f$ satisfies the following growh condition of order $p$: for any $N > 0$ there exist
$C_N > 0$ and a nonnegative function $\beta_N \in \xL^1(\Omega, \mathfrak{A}, P)$ such that 
$|f(x, y, \omega)| \le \beta_N(\omega) + C_N |y|^p$ for all $x \in \mathbb{R}^d$ with $|x| \le N$, all 
$y \in \mathbb{R}^m$, and a.e. $\omega \in \Omega$ in the case $1 < p < + \infty$, and 
$|f(x, y, \omega)| \le \beta_N(\omega)$ for a.e. $\omega \in \Omega$ and all 
$(x, y) \in \mathbb{R}^d \times \mathbb{R}^m$ with $\max\{ |x|, |y| \} \le N$ in the case $p = + \infty$;
}

\item{the multifunctions $(y, \omega) \mapsto \underline{d}_{x, y} f(x, y, \omega)$ and
$(y, \omega) \mapsto \overline{d}_{x, y} f(x, y, \omega)$ are Carath\'{e}\-o\-dory maps for any $x \in \mathbb{R}^d$;
}

\item{the codifferential mapping $D_{x, y} f(\cdot)$ satisfies the following growth condition of order $p$: for any 
$N > 0$ there exist $C_N > 0$, and nonnegative functions $\beta_N \in \xL^1(\Omega, \mathfrak{A}, P)$ and
$\gamma_N \in \xL^{p'}(\Omega, \mathfrak{A}, P)$ such that
\[
  \max\{ |a|, |v_x| \} \le \beta_N(\omega) + C_N |y|^p, \quad
  |v_y| \le \gamma_N(\omega) + C_N |y|^{p - 1}
\]
for all $(a, v_x, v_y) \in \underline{d}_{x, y} f(x, y, \omega) \cup \overline{d}_{x, y} f(x, y, \omega)$, all 
$x \in \mathbb{R}^d$ with $|x| \le N$, all $y \in\mathbb{R}^m$, and a.e. $\omega \in \Omega$ in the case 
$1 < p < + \infty$, and 
\[
  \max\{ |a|, |v_x|, |v_y| \} \le \beta_N(\omega)
\]
for all $(a, v_x, v_y) \in \underline{d}_{x, y} f(x, y, \omega) \cup \overline{d}_{x, y} f(x, y, \omega)$, 
a.e. $\omega \in \Omega$, and for all vectors $(x, y) \in \mathbb{R}^d \times \mathbb{R}^m$ with 
$\max\{ |x|, |y| \} \le N$ in the case $p = + \infty$.
}
\end{enumerate}

Note that the Carath\'{e}odory and the growth conditions on the function $f$ ensure that the value $\mathcal{I}(x, y)$
is correctly defined and finite for all $x \in \mathbb{R}^d$ and $y \in \xL^p(\Omega, \mathfrak{A}, P; \mathbb{R}^m)$.
Let $X = \mathbb{R}^d \times \xL^p(\Omega, \mathfrak{A}, P; \mathbb{R}^m)$.

\begin{theorem} \label{thrm:Codifferentiability}
Let $1 < p \le + \infty$ and Assumption~1 be valid. Then the functional $\mathcal{I}$ is
codifferentiable on $\mathbb{R}^d \times \xL(\Omega, \mathfrak{A}, P; \mathbb{R}^m)$, and for any $(x, y)$ from this
space the pair $D \mathcal{I}(x, y) = [\underline{d} \mathcal{I}(x, y), \overline{d} \mathcal{I}(x, y)]$, defined as
\begin{multline} \label{def:HypodiffExpectValue}
  \underline{d} \mathcal{I}(x, y) = \Big\{ (A, x^*) \in \mathbb{R} \times X^* \Bigm|
  A = \mathbb{E}[a], 
  \\
  \langle x^*, (h_x, h_y) \rangle = \big\langle \mathbb{E}[v_x], h_x \big\rangle 
  + \int_{\Omega} \langle v_y(\omega), h_y(\omega) \rangle \, d P(\omega) \quad \forall (h_x, h_y) \in X, 
  \\
  (a(\cdot), v_x(\cdot), v_y(\cdot)) \text{ is a measurable selection of the map } 
  \underline{d}_{x, y} f(x, y(\cdot), \cdot) \Big\}
\end{multline}
and
\begin{multline*}
  \overline{d} \mathcal{I}(x, y) = \Big\{ (B, y^*) \in \mathbb{R} \times X^* \Bigm|
  B = \mathbb{E}[b], 
  \\
  \langle y^*, (h_x, h_y) \rangle = \big\langle \mathbb{E}[w_x], h_x \big\rangle 
  + \int_{\Omega} \langle w_y(\omega), h_y(\omega) \rangle \, d P(\omega) \quad \forall (h_x, h_y) \in X, 
  \\
  (b(\cdot), w_x(\cdot), w_y(\cdot)) \text{ is a measurable selection of the map } 
  \underline{d}_{x, y} f(x, y(\cdot), \cdot) \Big\},
\end{multline*}
is a codifferential of $\mathcal{I}$ at $(x, y)$.
\end{theorem}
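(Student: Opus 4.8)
The plan is to verify the three requirements of Definition~\ref{def:Codifferential} for the pair $D\mathcal{I}(x,y)=[\underline{d}\mathcal{I}(x,y),\overline{d}\mathcal{I}(x,y)]$: convexity and compactness of the two sets in $(\mathbb{R}\times X^*,\tau_{\mathbb{R}}\times w^*)$, the normalization $\max_{\underline{d}\mathcal{I}}A=\min_{\overline{d}\mathcal{I}}B=0$, and the vanishing of the defining limit along every direction $(\Delta x,\Delta y)\in X$. Fix $(x,y)$, a number $\alpha_0\in(0,1]$, and set $z_s(\omega)=(x+s\Delta x,\,y(\omega)+s\Delta y(\omega))$; henceforth $N$ is taken large enough that $|x|+\alpha_0|\Delta x|\le N$, so that the bounds $\beta_N,C_N,\gamma_N$ from the growth condition on the codifferential apply along the whole segment $s\in[0,\alpha_0]$. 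Since $(y,\omega)\mapsto\underline{d}_{x,y}f(x,y,\omega)$ is a Carath\'eodory map, $\omega\mapsto\underline{d}_{x,y}f(x,y(\omega),\omega)$ is a measurable, compact‑valued multifunction and thus admits measurable selections $(a(\cdot),v_x(\cdot),v_y(\cdot))$ (Kuratowski--Ryll-Nardzewski); the growth condition on the codifferential then forces $a,v_x\in\xL^1$ and $v_y\in\xL^{p'}$ (using $(p-1)p'=p$ and $|y|\in\xL^p$), so the corresponding $(A,x^*)$ from~\eqref{def:HypodiffExpectValue} lies in $\mathbb{R}\times X^*$. Convexity of $\underline{d}\mathcal{I}(x,y)$ is immediate from the pointwise convexity of $\underline{d}_{x,y}f$, since a convex combination of admissible selections is again one and the map (selection)$\mapsto(A,x^*)$ is affine. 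Finally $\max_{(A,x^*)\in\underline{d}\mathcal{I}(x,y)}A=0$ because $a(\omega)\le\max\{a:(a,v_x,v_y)\in\underline{d}_{x,y}f(x,y(\omega),\omega)\}=\Phi_f(x,y(\omega),\omega;0,0)=0$ for any selection, while equality is attained for a measurable selection realizing this pointwise maximum (measurable maximum theorem); the set $\overline{d}\mathcal{I}(x,y)$ is handled symmetrically.

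For compactness, view the family $\mathcal{S}$ of admissible measurable selections as a subset of $\xL^1(\Omega;\mathbb{R})\times\xL^1(\Omega;\mathbb{R}^d)\times\xL^{p'}(\Omega;\mathbb{R}^m)$. By the growth condition on the codifferential, $\mathcal{S}$ is bounded and its $\xL^1$‑components are dominated by a fixed integrable function, hence uniformly integrable; $\mathcal{S}$ is convex and norm‑closed (a norm limit of dominated selections has an a.e.\ convergent subsequence, whose a.e.\ limit lies in the closed pointwise sets), hence weakly closed. Since $\xL^{p'}$ is reflexive for $1<p'<\infty$ and the $\xL^1$‑components are uniformly integrable, $\mathcal{S}$ is weakly compact by the Dunford--Pettis and Eberlein--\v{S}mulian theorems (one passes to strongly convergent convex combinations via Mazur's lemma to identify weak limits inside $\mathcal{S}$). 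The affine "integration'' map $\mathcal{S}\to\mathbb{R}\times X^*$ sending a selection to the pair $(A,x^*)$ of~\eqref{def:HypodiffExpectValue} is continuous from the weak topology of $\mathcal{S}$ to $\tau_{\mathbb{R}}\times w^*$, so its image $\underline{d}\mathcal{I}(x,y)$ — and likewise $\overline{d}\mathcal{I}(x,y)$ — is convex and compact in $(\mathbb{R}\times X^*,\tau_{\mathbb{R}}\times w^*)$.

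The next step is the interchange identity: for every $(\Delta x,\Delta y)\in X$ and every $\alpha\ge0$,
\[
  \max_{(A,x^*)\in\underline{d}\mathcal{I}(x,y)}\big(A+\alpha\langle x^*,(\Delta x,\Delta y)\rangle\big)=\mathbb{E}\big[\Phi_f(x,y(\cdot),\cdot;\alpha\Delta x,\alpha\Delta y(\cdot))\big],
\]
together with the symmetric identity for $\overline{d}\mathcal{I}(x,y)$, $\Psi_f$ and $\min$ in place of $\max$. The inequality ``$\le$'' is pointwise from~\eqref{eq:CodiffPrimal}; the reverse follows by selecting, through the measurable maximum theorem, a measurable selection attaining the pointwise maximum defining $\Phi_f(x,y(\omega),\omega;\alpha\Delta x,\alpha\Delta y(\omega))$, which is admissible by the growth condition on the codifferential and makes the right‑hand side a finite integral. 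Combining this with $\mathcal{I}(x+\alpha\Delta x,y+\alpha\Delta y)-\mathcal{I}(x,y)=\int_\Omega\big(f(z_\alpha(\omega),\omega)-f(z_0(\omega),\omega)\big)\,dP(\omega)$ shows that the quantity under the limit in Definition~\ref{def:Codifferential} is bounded above by $\int_\Omega g_\alpha\,dP$, where $g_\alpha(\omega)=\tfrac1\alpha\big|f(z_\alpha(\omega),\omega)-f(z_0(\omega),\omega)-\Phi_f(x,y(\omega),\omega;\alpha\Delta x,\alpha\Delta y(\omega))-\Psi_f(x,y(\omega),\omega;\alpha\Delta x,\alpha\Delta y(\omega))\big|$, and $g_\alpha(\omega)\to0$ for a.e.\ $\omega$ by the pointwise joint codifferentiability of $f$.

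It remains to dominate $g_\alpha$ by a fixed $\xL^1$ function uniformly in $\alpha\in(0,\alpha_0]$ and apply the dominated convergence theorem. Since $\max a=0$ on $\underline{d}_{x,y}f$, the growth condition on the codifferential gives $\tfrac1\alpha|\Phi_f(x,y(\omega),\omega;\alpha\Delta x,\alpha\Delta y(\omega))|\le|\Delta x|(\beta_N(\omega)+C_N|y(\omega)|^p)+|\Delta y(\omega)|(\gamma_N(\omega)+C_N|y(\omega)|^{p-1})$, which lies in $\xL^1$ by Young's inequality and $|y|\in\xL^p$; the $\Psi_f$‑term is estimated identically. For the difference of values, the scalar map $s\mapsto f(z_s(\omega),\omega)$ is directionally differentiable on $[0,\alpha_0]$ (composition of a codifferentiable, hence directionally differentiable, function with an affine map), and by~\eqref{eq:QuasidiffViaCodiff} its one‑sided derivatives are bounded in modulus by $|\Delta x|\sup|v_x|+|\Delta y(\omega)|\sup|v_y|$ over $\underline{d}_{x,y}f(z_s(\omega),\omega)\cup\overline{d}_{x,y}f(z_s(\omega),\omega)$; invoking the growth condition on the codifferential and the elementary bound $|y(\omega)+s\Delta y(\omega)|^q\le c_q(|y(\omega)|^q+|\Delta y(\omega)|^q)$ for $q=p$ and $q=p-1$ and $s\in[0,1]$, this is majorized by a fixed $M(\omega)\in\xL^1$ independent of $s$; hence $s\mapsto f(z_s(\omega),\omega)$ is $M(\omega)$‑Lipschitz on $[0,\alpha_0]$ and $\tfrac1\alpha|f(z_\alpha(\omega),\omega)-f(z_0(\omega),\omega)|\le M(\omega)$. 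Adding the bounds yields $g_\alpha\le G$ for a fixed $G\in\xL^1$ and all $\alpha\in(0,\alpha_0]$, so $\int g_\alpha\,dP\to0$ and the defining limit of Definition~\ref{def:Codifferential} vanishes; the case $p=+\infty$ is analogous, with every growth bound replaced by $\beta_N\in\xL^1$ and the dual $(\xL^\infty)^*$ accommodated via the canonical embedding $\xL^1\hookrightarrow(\xL^\infty)^*$ (under which weak $\xL^1$‑convergence of the $v_y$‑components is $w^*$‑convergence of the associated functionals). The main obstacles are the weak${}^*$‑compactness of $\underline{d}\mathcal{I}(x,y)$ and $\overline{d}\mathcal{I}(x,y)$ — where one must show the family of admissible measurable selections is closed under weak limits and separately handle the non‑reflexive case $p=+\infty$ — and the interchange of integration with the pointwise maxima and minima defining $\Phi_f$ and $\Psi_f$, both of which rest on measurable‑selection machinery; the uniform $\xL^1$‑domination of the difference quotients, via the bound on the one‑sided derivatives of $s\mapsto f(z_s(\omega),\omega)$, is the other point requiring care.
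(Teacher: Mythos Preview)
Your proof is correct and follows essentially the same two-part strategy as the paper, which splits the argument into Lemma~\ref{lem:CodifferentialSets} (nonemptiness, convexity, compactness, normalization of the sets) and Lemma~\ref{lem:AlmostCodifferentiability} (the limit condition), using the same ingredients: measurable-selection theorems, weak compactness of the selection set in $\xL^1\times\xL^1\times\xL^{p'}$ via Dunford--Pettis/Eberlein--\v{S}mulian/Mazur, continuity of the integration map into $(\mathbb{R}\times X^*,\tau_{\mathbb{R}}\times w^*)$, the interchange identity, and dominated convergence.

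The only substantive difference is how the difference quotients are dominated. The paper twice invokes the mean value theorem for codifferentiable functions \cite[Prp.~2]{Dolgopolik_MCD}: once to write $\tfrac{1}{\alpha}(f(z_\alpha,\omega)-f(z_0,\omega))$ as an inner product with elements of the codifferential at an intermediate point, and once for $\tfrac{1}{\alpha}\Phi_f$ via an auxiliary scalar function $g_\omega(t)=\Phi_f(x,y(\omega),\omega;t\Delta x,t\Delta y(\omega))$. You instead bound the one-sided derivatives of $s\mapsto f(z_s(\omega),\omega)$ directly from the quasidifferential and infer Lipschitz continuity on the segment; this is valid (codifferentiability gives directional differentiability in both directions along the segment, hence continuity, and then the standard ``bounded right derivative $+$ continuity $\Rightarrow$ Lipschitz'' argument applies), and your direct estimate $\tfrac{1}{\alpha}|\Phi_f|\le |\Delta x|\sup|v_x|+|\Delta y(\omega)|\sup|v_y|$ using $\max a=0$ is a bit shorter than the paper's detour through $g_\omega$. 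The two routes are equivalent in strength.
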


The proof of Theorem~\ref{thrm:Codifferentiability} is similar to the proof of the codifferentiability of the
mapping $\mathcal{I}(u) = \int_{\Omega} f(x, u(x), \nabla u(x)) d x$ from the author's papers
\cite{DolgopolikMV_ESAIM,Dolgopolik_arXiv} (here $\Omega \subseteq \mathbb{R}^n$ is an open set and $u$ belongs to
the Sobolev space). On the other hand, Theorem~\ref{thrm:Codifferentiability} cannot be directly deduced from the main
results of \cite{DolgopolikMV_ESAIM,Dolgopolik_arXiv}. That is why below we present a detailed proof of
Theorem~\ref{thrm:Codifferentiability}. It seems possible to prove a more general result on the codifferentiability of
integral functionals defined on Banach spaces that subsumes Theorem~\ref{thrm:Codifferentiability} and the main results
of \cite{DolgopolikMV_ESAIM,Dolgopolik_arXiv} as particular cases. A development of such general theorem on the
codifferentiability of nonsmooth integral functionals is an interesting open problem for future research.

For the sake of convenience, we divide the proof of Theorem~\ref{thrm:Codifferentiability} into two lemmas.

\begin{lemma} \label{lem:CodifferentialSets}
Let $1 < p \le + \infty$ and Assumption~1 be valid. Then for any $(x, y) \in X$ the sets 
$\underline{d} \mathcal{I}(x, y)$ and $\overline{d} \mathcal{I}(x, y)$ from Theorem~\ref{thrm:Codifferentiability}
are nonempty, convex, compact in the topological product $(\mathbb{R} \times X^*, \tau_{\mathbb{R}} \times w^*)$, and
satisfy the following equalities:
\begin{equation} \label{eq:IntegralCodiffZeroAtZero}
  \max_{(A, x^*) \in \underline{d} \mathcal{I}(x, y)} A 
  = \min_{(B, y^*) \in \overline{d} \mathcal{I}(x, y)} B = 0.
\end{equation}
\end{lemma}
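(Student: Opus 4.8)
The plan is to verify the four claimed properties---nonemptiness, convexity, compactness in $(\mathbb{R} \times X^*, \tau_{\mathbb{R}} \times w^*)$, and the normalization \eqref{eq:IntegralCodiffZeroAtZero}---in that order, treating $\underline{d}\mathcal{I}(x,y)$ in detail and noting that $\overline{d}\mathcal{I}(x,y)$ is handled symmetrically. Fix $(x,y) \in X$ and write $N$ for some bound with $|x| \le N$. First I would establish \emph{nonemptiness}: by item~3 of Assumption~\ref{assump:Codifferentiability} the multifunction $\omega \mapsto \underline{d}_{x,y}f(x,y(\omega),\omega)$ is measurable with nonempty compact values, so by the Kuratowski--Ryll-Nardzewski selection theorem it admits a measurable selection $(a(\cdot),v_x(\cdot),v_y(\cdot))$; the growth condition of item~4 together with $y \in \xL^p$ shows $a, v_x \in \xL^1(\Omega;\mathbb{R}^{1+d})$ and $v_y \in \xL^{p'}(\Omega;\mathbb{R}^m)$ (in the case $p = +\infty$, all three are in $\xL^1$, and $\xL^{p'} = \xL^1$ up to the pairing), so $\mathbb{E}[a]$, $\mathbb{E}[v_x]$ and the functional $(h_x,h_y) \mapsto \langle \mathbb{E}[v_x],h_x\rangle + \int_\Omega \langle v_y(\omega),h_y(\omega)\rangle\,dP(\omega)$ are well defined, the latter being a bounded linear functional on $X = \mathbb{R}^d \times \xL^p$ by Hölder; hence the selection produces an element of $\underline{d}\mathcal{I}(x,y)$.

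\emph{Convexity} follows from the convexity of the fibres $\underline{d}_{x,y}f(x,y(\omega),\omega)$: given two measurable selections and $\lambda \in [0,1]$, the pointwise convex combination is again a measurable selection, and the maps (selection) $\mapsto \mathbb{E}[a]$ and (selection) $\mapsto x^*$ are affine, so the corresponding element of $\underline{d}\mathcal{I}(x,y)$ is the $\lambda$-combination of the two given ones. For the \emph{normalization} \eqref{eq:IntegralCodiffZeroAtZero}, recall that for a.e.\ $\omega$ one has $\max_{(a,v_x,v_y) \in \underline{d}_{x,y}f(x,y(\omega),\omega)} a = \Phi_f(x,y(\omega),\omega;0,0) = 0$; taking a measurable selection that attains this pointwise maximum (again via measurable selection applied to the argmax multifunction, which is measurable with compact values) yields an element of $\underline{d}\mathcal{I}(x,y)$ with $A = \mathbb{E}[a] = 0$, while for every selection $a(\omega) \le 0$ a.e.\ forces $A \le 0$; hence $\max_{(A,x^*)} A = 0$, and symmetrically $\min_{(B,y^*)} B = 0$.

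The main obstacle is \emph{compactness}. I would factor the set $\underline{d}\mathcal{I}(x,y)$ through the set $S$ of measurable selections, bounded in $\xL^1 \times \xL^1 \times \xL^{p'}$ by item~4. The map sending a selection to its $A$-component $\mathbb{E}[a] \in \mathbb{R}$ and to the pair $(\mathbb{E}[v_x], v_y(\cdot)) \in \mathbb{R}^d \times \xL^{p'}$---which determines $x^* \in X^*$ via the fixed isometric identification $(\mathbb{R}^d \times \xL^p)^* = \mathbb{R}^d \times \xL^{p'}$---is the relevant parametrization, so it suffices to show the image of $S$ in $\mathbb{R} \times \mathbb{R}^d \times \xL^{p'}$ is compact for $\tau_{\mathbb{R}} \times \tau_{\mathbb{R}^d} \times w(\xL^{p'},\xL^p)$ (note $w^*$ on $X^*$ restricted to the $\xL^{p'}$ factor is exactly $\sigma(\xL^{p'},\xL^p)$, which when $1 < p < \infty$ is also the weak topology). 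The $\mathbb{R} \times \mathbb{R}^d$ part is bounded by item~4 hence relatively compact; for the $v_y$-part, when $1 < p' < \infty$ the $\xL^{p'}$-norm bound gives weak sequential relative compactness by reflexivity (equivalently Dunford--Pettis, with the bound $|v_y| \le \gamma_N + C_N|y|^{p-1} \in \xL^{p'}$ providing uniform integrability), while when $p' = 1$ (i.e.\ $p = +\infty$) one uses that $|v_y| \le \beta_N$ with $\beta_N \in \xL^1$ independent of the selection, which is precisely the Dunford--Pettis condition for relative weak compactness in $\xL^1$. Closedness is the delicate step: given a net (or, passing through metrizability/Eberlein--Šmulian on bounded sets, a sequence) of selections whose images converge, I would extract via a Komlós-type or Mazur-convexity argument a selection of the limit; concretely, the limit of $v_y^{(k)}$ in the weak topology lies, by Mazur's lemma, in the closed convex hull of the tail, and using that the fibre map has closed convex values together with a standard result on weak closedness of integrals of convex-valued multifunctions (as in \cite{AubinFrankowska}, or an Aumann-type theorem), the limit point is realized by an actual measurable selection of $\underline{d}_{x,y}f(x,y(\cdot),\cdot)$; the $A$- and $\mathbb{E}[v_x]$-components pass to the limit by dominated convergence along the same selections. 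This identifies the image of $S$ as a closed subset of a compact set, hence compact, and since the parametrization map to $\mathbb{R} \times X^*$ is continuous for the product topology, $\underline{d}\mathcal{I}(x,y)$ is compact as claimed. The argument for $\overline{d}\mathcal{I}(x,y)$ is identical, with $\max$ replaced by $\min$ and the selection taken from the same multifunction $\underline{d}_{x,y}f$ as written (respectively $\overline{d}_{x,y}f$, correcting the evident typo).
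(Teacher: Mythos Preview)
Your proposal is correct and follows essentially the same route as the paper: nonemptiness via measurable selection plus the growth bounds, convexity from the fibrewise convexity and linearity of the integration map, normalization \eqref{eq:IntegralCodiffZeroAtZero} via a Filippov-type selection attaining $a(\omega)=0$, and compactness by combining the growth bounds (giving weak relative compactness in $\xL^1 \times \xL^1 \times \xL^{p'}$ via Dunford--Pettis and reflexivity) with a Mazur argument to identify weak limits as bona fide selections. The only organisational difference is that the paper cleanly separates the argument into (i) weak compactness of the selection set $\mathcal{E}(x,y)$ in $Y = \xL^1 \times \xL^1 \times \xL^{p'}$ and (ii) continuity of the linear map $\mathcal{T}\colon (Y,\text{weak}) \to (\mathbb{R}\times X^*,\tau_{\mathbb{R}}\times w^*)$, whereas you argue compactness of the image more directly; your ``dominated convergence'' remark for the $A$- and $\mathbb{E}[v_x]$-components is really just weak convergence in $\xL^1$ tested against constants, and the Koml\'os reference is unnecessary since Mazur suffices.
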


\begin{proof}
Fix any $(x, y) \in X$. We prove the statement of the lemma only for the hypodifferential 
$\underline{d} \mathcal{I}(x, y)$, since the proof for the hyperdifferential $\overline{d} \mathcal{I}(x, y)$ is exactly
the same.

By Assumption~1 the multifunction 
$(y, \omega) \mapsto \underline{d}_{x, y} f(x, y, \omega)$ is a Carath\'{e}odory map. Therefore by
\cite[Thrm.~8.2.8]{AubinFrankowska} the multifunction $\underline{d}_{x, y} f(x, y(\cdot), \cdot)$ is measurable, which
by \cite[Thrm.~8.1.3]{AubinFrankowska} implies that there exist a measurable selection 
$(a(\cdot), v_x(\cdot), v_y(\cdot))$ of this mapping. Furthermore, by the growth condition on the codifferential 
$D_{x, y} f(\cdot)$ from Assumption~1 all measurable selections of the set-valued mapping
$\underline{d}_{x, y} f(x, y(\cdot), \cdot)$ belong to the space 
\begin{equation} \label{eq:SpaceOfSelections}
  Y := \xL^1(\Omega, \mathfrak{A}, P) \times \xL^1(\Omega, \mathfrak{A}, P; \mathbb{R}^d)
  \times \xL^{p'}(\Omega, \mathfrak{A}, P; \mathbb{R}^m).
\end{equation}
Consequently, the linear functional $x^*$, defined as
\[
  \langle x^*, (h_x, h_y) \rangle = \big\langle \mathbb{E}[v_x], h_x \big\rangle 
  + \int_{\Omega} \langle v_y(\omega), h_y(\omega) \rangle d P(\omega) \quad \forall (h_x, h_y) \in X,
\]
belongs to $X^*$, and one can conclude that the hypodifferential $\underline{d} \mathcal{I}(x, y)$ is correctly defined
and nonempty.

Denote by $\mathcal{E}(x, y)$ the set of all measurable selections $z(\cdot) = (a(\cdot), v_x(\cdot), v_y(\cdot))$ of
the set-valued mapping $\underline{d}_{x, y} f(x, y(\cdot), \cdot)$. As was noted above, $\mathcal{E}(x, y)$ is a
subset of the space $Y$ defined in \eqref{eq:SpaceOfSelections}. For any $z = (a, v_x, v_y) \in Y$  denote by
$\mathcal{T}(z)$ the pair $(A, x^*)$ defined as in \eqref{def:HypodiffExpectValue}. Then 
$\underline{d} \mathcal{I}(x, y) = \mathcal{T}(\mathcal{E}(x, y))$.

By definition, for a.e. $\omega \in \Omega$ the hypodifferential $\underline{d}_{x, y} f(x, y(\omega), \omega)$ is a
convex set. Therefore the set of measurable selections $\mathcal{E}(x, y)$ of the multifunction 
$\underline{d}_{x, y} f(x, y(\cdot), \cdot)$ is convex. Hence taking into account the fact that the operator
$\mathcal{T}$ is linear one obtains that the hypodifferential $\underline{d} \mathcal{I}(x, y)$ is a convex set as the
image of a convex set under a linear map.

Recall that by the definition of hypodifferential one has $a \le 0$ for any 
$(a, v_x, v_y) \in \underline{d}_{x, y} f(x, y(\omega), \omega)$, $\omega \in \Omega$. Therefore $A \le 0$ for all 
$(A, x^*) \in \underline{d} \mathcal{I}(x, y)$. On the other hand, observe that thanks to equality
\eqref{eq:CodiffZeroAtZero} for a.e. $\omega \in \Omega$ one has
\[
  0 \in \Big\{ a \in \mathbb{R} \Bigm| \exists (v_x, v_y) \in \mathbb{R}^{d + m} \colon 
  (a, v_x, v_y) \in \underline{d}_{x, y} f(x, y(\omega), \omega) \Big\}.
\]
Hence by the Filippov theorem (see, e.g. \cite[Thrm.~8.2.10]{AubinFrankowska}) there exists a measurable selection
$(a_0(\cdot), v_{x0}(\cdot), v_{y0}(\cdot))$ of the set-valued map $\underline{d}_{x, y} f(x, y(\cdot), \cdot)$ such
that
$a_0(\omega) = 0$ almost surely. Consequently, for $(A_0, x^*_0) = \mathcal{T}(a_0, v_{x0}, v_{y0})$ one has $A_0 = 0$,
which implies that equality \eqref{eq:IntegralCodiffZeroAtZero} holds true.

Thus, it remains to prove the compactness of the set $\underline{d} \mathcal{I}(x, y)$ in the corresponding product
topology. To this end, let us verify that the set $\mathcal{E}(x, y)$ is a weakly compact subset of the space $Y$
defined in \eqref{eq:SpaceOfSelections}, and the operator $\mathcal{T}$ continuously maps the space $Y$ endowed with 
the weak topology to the topological product $(\mathbb{R}, \tau_{\mathbb{R}}) \times (X^*, w^*)$. Then one can conclude
that the hypodifferential $\underline{d} \mathcal{I}(x, y)$ is compact in the corresponding product topology as 
a continuous image of a compact set.

We start with the proof of the continuity of the operator $\mathcal{T}$. Let $\mathcal{V}$ be an open subset of the
product space $(\mathbb{R}, \tau_{\mathbb{R}}) \times (X^*, w^*)$. Let us show that its preimage 
$\mathcal{U} = \mathcal{T}^{-1}(\mathcal{V})$ under the map $\mathcal{T}$ is weakly open in $Y$. Indeed, fix any
$(a, v_x, v_y) \in \mathcal{U}$. Then $(A, x^*) = \mathcal{T}(a, v_x, v_y) \in \mathcal{V}$, which due to the openness
of the set $\mathcal{V}$ in the corresponding topology implies that there exist $\varepsilon > 0$, $n \in \mathbb{N}$,
and pairs $(h_i, \xi_i) \in X$, $i \in I = \{ 1, \ldots, n \}$,  such that
\[
  \mathcal{V}_{\varepsilon}(A, x^*) = \Big\{ (B, y^*) \in \mathbb{R} \times X^* \Bigm| \big| B - A \big| < \varepsilon,
  \quad \max_{i \in I} \big| \langle y^* - x^*, (h_i, \xi_i) \rangle \big| < \varepsilon \Big\} 
  \subseteq \mathcal{V}.
\]
Introduce the set
\begin{align*}
  \mathcal{U}_{\varepsilon}(a, v_x, v_y) = \bigg\{ (b, w_x, w_y) \in Y \bigg| 
  &\big| \mathbb{E}(b - a) \big| < \varepsilon, 
  \\
  &\max_{i \in I} \Big| \int_{\Omega} \langle w_x(\omega) - v_x(\omega), h_i \rangle \, d P(\omega) \Big| 
  < \frac{\varepsilon}{2},
  \\
  &\max_{i \in I} 
  \Big| \int_{\Omega} \langle w_y(\omega) - v_y(\omega), \xi_i(\omega) \rangle \, d P(\omega) \Big| 
  < \frac{\varepsilon}{2} \bigg\}.
\end{align*}
This set is neighbourhood of the point $(a, v_x, v_y)$ in the weak topology on $Y$. Moreover, by definition
$\mathcal{T}(\mathcal{U}_{\varepsilon}(a, v_x, v_y)) \subseteq \mathcal{V}_{\varepsilon}(A, x^*)$, which implies that
$\mathcal{U}_{\varepsilon}(a, v_x, v_y) \subseteq \mathcal{U}$. Thus, for any point $(a, v_x, v_y) \in \mathcal{U}$
there exists a neighbourhood of this point in the weak topology contained in $\mathcal{U}$. In other words, the set
$\mathcal{U}$ is weakly open, and one can conclude that the operator $\mathcal{T}$ is continuous with respect to the
chosen topologies.

Let us finally proof the weak compactness of the set $\mathcal{E}(x, y)$ in the space $Y$ defined in
\eqref{eq:SpaceOfSelections}. By the Eberlein-\v{S}mulian theorem it suffice to prove that $\mathcal{E}(x, y)$ is weakly
sequentially compact. To this end, choose any sequence 
$z_n(\cdot) = (a_n(\cdot), v_{xn}(\cdot), v_{yn}(\cdot)) \in \mathcal{E}(x, y)$, $n \in \mathbb{N}$. Let us consider
two cases.

\textbf{Case $p = + \infty$}. By the growth condition on the codifferential $D_{x, y} f(\cdot)$ (see
Assumption~1) there exists an a.e. nonnegative function 
$\beta \in \xL^1(\Omega, \mathfrak{A}, P)$ such that for a.e. $\omega \in \Omega$ one has
\[
  \max\big\{ |a_n(\omega)|, |v_{xn}(\omega)|, |v_{yn}(\omega)| \big\} \le \beta(\omega) 
  \quad \forall n \in \mathbb{N}.
\]
Hence by the weak compactness criterion in $\xL^1$ (see, e.g. \cite[Thrm.~4.7.20]{Bogachev}) the closures of the sets
$\{ a_n \}_{n \in \mathbb{N}}$, $\{ v_{xn} \}_{n \in \mathbb{N}}$, and $\{ v_{yn} \}_{n \in \mathbb{N}}$ are weakly
compact in the corresponding $\xL^1$ spaces. Therefore by the Eberlein-\v{S}mulian theorem there exists a subsequence
$z_{n_k} = (a_{n_k}, v_{xn_k}, v_{yn_k})$ weakly converging to some $z_*$ in $Y$. By Mazur's lemma there exists a
sequence of convex combinations $\{ \widehat{z}_k \}$ of elements of the sequence $z_{n_k}$ strongly converging to
$z_*$. Therefore, as is well known, there exists a subsequence $\{ \widehat{z}_{k_l} \}$ converging to $z_*$ almost
surely. 

Note that due to the convexity of $\mathcal{E}(x, y)$ one has $\{ \widehat{z}_k \} \subset \mathcal{E}(x, y)$, that is,
$\widehat{z}_k(\omega) \in \underline{d}_{x, y} f(x, y(\omega), \omega)$ for a.e. $\omega \in \Omega$ and all 
$k \in \mathbb{N}$. Hence taking into account the fact that by definition the hypodifferential 
$\underline{d}_{x, y} f(x, y(\omega), \omega)$, $\omega \in \Omega$, is a closed set, one obtains that 
$z_*(\omega) \in \underline{d}_{x, y} f(x, y(\omega), \omega)$ for a.e. $\omega \in \Omega$. Thus, 
$z_* \in \mathcal{E}(x, y)$, and the set $\mathcal{E}(x, y)$ is weakly sequentially compact, which completes the proof.

\textbf{Case $p < + \infty$}. By the growth condition on the codifferential $D_{x, y} f(\cdot)$ (see
Assumption~1) there exist $C > 0$ and a.e. nonnegative functions 
$\beta \in \xL^1(\Omega, \mathfrak{A}, P)$ and $\gamma \in \xL^{p'}(\Omega, \mathfrak{A}, P)$ such that for a.e. 
$\omega \in \Omega$ and all $n \in \mathbb{N}$ one has
\[
  \max\big\{ |a_n(\omega)|, |v_{xn}(\omega)| \big\} \le \beta(\omega) + C |y(\omega)|^p, \quad
  |v_{yn}(\omega)| \le \gamma(\omega) + C|y(\omega)|^{p - 1}.
\]
Observe that the right-hand side of the first inequality belongs to $\xL^1(\Omega, \mathfrak{A}, P)$, while the
right-hand side of the second one belongs to $\xL^{p'}(\Omega, \mathfrak{A}, P)$. Thus, the sequence
$\{ v_{yn} \}$ is norm-bounded in $\xL^{p'}(\Omega, \mathfrak{A}, P; \mathbb{R}^m)$, which due to the reflexivity of
this space (note that $1 < p' < + \infty$, since $1 < p < + \infty$) implies that there exists a weakly convergent
subsequence $\{ v_{y n_k} \}$. In turn, the existence of weakly convergence subsequences of the sequences $\{ a_n \}$
and $\{ v_{xn} \}$ follows from the weak compactness criterion in $\xL^1$ (see \cite[Thrm.~4.7.20]{Bogachev}). 

Thus, there exists a subsequence $\{ z_{n_k} \}$ weakly converging to some $z_* \in Y$. Now, applying Mazur's lemma and
arguing precisely in the same way as in the case $p = + \infty$ one can prove the weak compactness of the set 
$\mathcal{E}(x, y)$.
\end{proof}

Denote by $\| \cdot \|_p$ the standard norm on $\xL^p(\Omega, \mathfrak{A}, P)$.

\begin{lemma} \label{lem:AlmostCodifferentiability}
Let $1 < p \le + \infty$, Assumption~1 be valid, and the sets 
$\underline{d} \mathcal{I}(x, y)$ and $\overline{d} \mathcal{I}(x, y)$ be defined as in
Theorem~\ref{thrm:Codifferentiability}. Then for any 
$(x, y) \in X$ and $(\Delta x, \Delta y) \in X$ one has
\begin{multline*}
  \lim_{\alpha \to + 0} \frac{1}{\alpha} 
  \Big| \mathcal{I}(x + \alpha \Delta x, y + \alpha \Delta y) - \mathcal{I}(x, y) 
  - \max_{(A, x^*) \in \underline{d} \mathcal{I}(x, y)} 
  \big( A + \langle x^*, \alpha (\Delta x, \Delta y) \rangle \big)
  \\
  - \min_{(B, y^*) \in \overline{d} \mathcal{I}(x, y)} 
  \big( B + \langle y^*, \alpha (\Delta x, \Delta y) \rangle \big)
  \Big| = 0.
\end{multline*}
\end{lemma}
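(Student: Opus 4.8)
The plan is to reduce the directional-differentiability estimate for $\mathcal{I}$ to a pointwise (in $\omega$) estimate coming from the codifferentiability of $f$ in $(x,y)$, and then to pass to the limit under the integral sign using a dominated-convergence argument justified by the growth conditions in Assumption~\ref{assump:Codifferentiability}. The key observation is that the maximum and minimum defining $\underline{d}\mathcal{I}(x,y)$ and $\overline{d}\mathcal{I}(x,y)$ can be computed $\omega$-by-$\omega$: by the interchange of maximization/minimization and integration (an argument of the type in \cite[Thrm.~8.2.10]{AubinFrankowska} together with measurable selection), for any fixed $(\Delta x,\Delta y)\in X$ and $\alpha>0$ one has
\begin{align*}
  \max_{(A, x^*) \in \underline{d} \mathcal{I}(x, y)}
  \big( A + \langle x^*, \alpha(\Delta x, \Delta y)\rangle \big)
  &= \int_{\Omega} \Phi_f(x, y(\omega), \omega; \alpha\Delta x, \alpha\Delta y(\omega)) \, dP(\omega), \\
  \min_{(B, y^*) \in \overline{d} \mathcal{I}(x, y)}
  \big( B + \langle y^*, \alpha(\Delta x, \Delta y)\rangle \big)
  &= \int_{\Omega} \Psi_f(x, y(\omega), \omega; \alpha\Delta x, \alpha\Delta y(\omega)) \, dP(\omega).
\end{align*}
The point here is that a measurable selection achieving the pointwise maximum of $a + \langle v_x,\alpha\Delta x\rangle + \langle v_y,\alpha\Delta y(\omega)\rangle$ over $\underline{d}_{x,y}f(x,y(\omega),\omega)$ exists (compact-valued measurable multifunction, Carathéodory linear integrand), lies in the space $Y$ of \eqref{eq:SpaceOfSelections} by the growth condition, and yields an element of $\underline{d}\mathcal{I}(x,y)$ attaining the supremum; conversely any element of $\underline{d}\mathcal{I}(x,y)$ is dominated pointwise. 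I would prove this identity first as a preliminary step.

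With this identity the difference quotient becomes
\[
  \frac{1}{\alpha}\int_{\Omega} \Big( f(x+\alpha\Delta x, y(\omega)+\alpha\Delta y(\omega),\omega) - f(x,y(\omega),\omega)
  - \Phi_f(\cdots;\alpha\Delta x,\alpha\Delta y(\omega)) - \Psi_f(\cdots;\alpha\Delta x,\alpha\Delta y(\omega)) \Big)\,dP(\omega),
\]
and it suffices to show that the integral of the absolute value of the integrand, divided by $\alpha$, tends to $0$ as $\alpha\to+0$. Denote the integrand by $r_\alpha(\omega)$. By the pointwise codifferentiability of $f$ in $(x,y)$ assumed at the start of the section, $|r_\alpha(\omega)|/\alpha \to 0$ for a.e.\ $\omega$. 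So the whole problem is to produce an integrable majorant for $|r_\alpha(\omega)|/\alpha$ uniformly in small $\alpha$, so that dominated convergence applies.

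This domination step is the main obstacle. I would handle it by splitting $r_\alpha$ into the four terms and estimating each. For the difference $f(x+\alpha\Delta x, y(\omega)+\alpha\Delta y(\omega),\omega) - f(x,y(\omega),\omega)$, I would use the codifferentiability to write it, for each $\omega$ and small $\alpha$, as $\Phi_f + \Psi_f$ plus an $o(\alpha)$ term, hence bound $|f(x+\alpha\Delta x,y+\alpha\Delta y,\omega) - f(x,y,\omega)|$ by a multiple of $\alpha$ times $\max\{|a|,|v_x|,|v_y|\}$ over points of the codifferential along the segment, plus the remainder; more directly, since $\max_{(a,v_x,v_y)}$ and $\min_{(b,w_x,w_y)}$ of affine functions are Lipschitz in $(\Delta x,\Delta y)$ with constant controlled by $\sup\{\max(|v_x|,|v_y|) : (a,v_x,v_y)\in D_{x,y}f\}$, I obtain $|\Phi_f(\cdots;\alpha\Delta x,\alpha\Delta y(\omega))| \le \alpha\big(|\Delta x|\,M_x(\omega) + |\Delta y(\omega)|\,M_y(\omega)\big)$ where $M_x(\omega)=\sup|v_x|$, $M_y(\omega)=\sup|v_y|$ over the codifferential sets (using $\Phi_f(\cdots;0,0)=0$), and similarly for $\Psi_f$. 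By Assumption~\ref{assump:Codifferentiability}(4), with $N$ chosen so that $|x|,|x+\Delta x|\le N$ and absorbing $\Delta x$, we get $M_x(\omega) \le \beta_N(\omega) + C_N|y(\omega)|^p \in \xL^1$ and $M_y(\omega) \le \gamma_N(\omega) + C_N|y(\omega)|^{p-1}$; since $\Delta y\in\xL^p$, Hölder's inequality gives $|\Delta y(\cdot)|\,M_y(\cdot)\in\xL^1$ (the product of an $\xL^p$ function with an $\xL^{p'}$ function: note $|y|^{p-1}\in\xL^{p'}$ because $(p-1)p'=p$, and $\gamma_N\in\xL^{p'}$). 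For the term $f(x+\alpha\Delta x,y+\alpha\Delta y,\omega)-f(x,y,\omega)$ itself I would instead note it is sandwiched between $\Phi_f+\Psi_f$ evaluated at $\alpha(\Delta x,\Delta y(\omega))$ plus a remainder that, by the same codifferentiability and Lipschitz estimates applied on a small interval of $\alpha\in(0,\alpha_0]$, is also $O(\alpha)$ with the same integrable majorant — or, cleanly, bound $|f(x+\alpha\Delta x,y+\alpha\Delta y,\omega)-f(x,y,\omega)|/\alpha$ by the supremum over $t\in[0,1]$ of a directional-derivative-type quantity at $(x+t\alpha\Delta x, y(\omega)+t\alpha\Delta y(\omega))$, dominated again by $M_x,M_y$-type terms via the growth condition with $N$ enlarged to cover the whole segment. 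In the case $p=+\infty$ the argument is simpler: everything is dominated by $\beta_N(\omega)\in\xL^1$ uniformly, and $|\Delta y(\omega)|\le\|\Delta y\|_\infty$ a.s. Once an integrable majorant $g\in\xL^1$ with $|r_\alpha(\omega)|/\alpha \le g(\omega)$ for all $\alpha\in(0,\alpha_0]$ and a.e.\ $\omega$ is in hand, the dominated convergence theorem gives $\int_\Omega |r_\alpha(\omega)|/\alpha\,dP(\omega)\to 0$, which is exactly the claimed limit. The only subtlety to watch is that the "$o(\alpha)$" in the pointwise codifferentiability is only asserted for each fixed direction, not uniformly on the segment, so the majorant must be built from the growth bounds on $D_{x,y}f$ (which are uniform on bounded sets) rather than from the codifferentiability remainder; I would make this explicit by writing, for any $\omega$ where $f(\cdot,\cdot,\omega)$ is codifferentiable, $f(x+\alpha\Delta x,y+\alpha\Delta y,\omega)-f(x,y,\omega) = \Phi_f+\Psi_f + \rho(\alpha,\omega)$ with $\rho(\alpha,\omega)/\alpha\to0$, and then bounding $|\Phi_f|,|\Psi_f|$, and separately $|f(x+\alpha\Delta x,y+\alpha\Delta y,\omega)-f(x,y,\omega)|$ directly, by the integrable quantities above, so that $|\rho(\alpha,\omega)|/\alpha$ inherits an integrable majorant and vanishes pointwise.
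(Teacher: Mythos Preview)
Your approach is essentially the same as the paper's: establish the interchange identity $\int_\Omega \Phi_f \, dP = \max_{\underline{d}\mathcal{I}}(\cdots)$ via a measurable selection (Filippov), note pointwise convergence of the remainder, and close by dominated convergence using the growth bounds in Assumption~\ref{assump:Codifferentiability}. Two minor differences are worth flagging. First, your Lipschitz estimate $|\Phi_f(\cdots;\alpha\Delta x,\alpha\Delta y(\omega))|\le \alpha\big(|\Delta x|M_x(\omega)+|\Delta y(\omega)|M_y(\omega)\big)$, obtained directly from $\Phi_f(\cdots;0,0)=0$ and the fact that a max of affine functions is Lipschitz with constant $\sup|v|$, is cleaner than the paper's route, which builds an auxiliary scalar function $g_\omega(t)=\Phi_f(\cdots;t\Delta x,t\Delta y(\omega))$ and applies the mean value theorem for codifferentiable functions to it. Second, for the term $\big(f(x+\alpha\Delta x,y(\omega)+\alpha\Delta y(\omega),\omega)-f(x,y(\omega),\omega)\big)/\alpha$ your ``supremum over $t\in[0,1]$ of a directional-derivative-type quantity at intermediate points'' is exactly what the paper does, but the paper makes this rigorous by invoking the mean value theorem for codifferentiable functions \cite[Prp.~2]{Dolgopolik_MCD}: there exist $\alpha(\omega)\in(0,\alpha)$ and points $(0,v_x,v_y)\in\underline{d}_{x,y}f$, $(0,w_x,w_y)\in\overline{d}_{x,y}f$ at the intermediate point such that the difference quotient equals $\langle v_x+w_x,\Delta x\rangle+\langle v_y+w_y,\Delta y(\omega)\rangle$. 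You should invoke such a result explicitly, since a bare sup-over-the-segment bound on increments is not automatic for merely directionally differentiable functions; your ``sandwich'' alternative is circular (it requires a majorant for the $f$-difference to produce one for $\rho$). With that tool in hand, the domination step and the H\"older argument $(p-1)p'=p$ go through exactly as you describe.
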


\begin{proof}
Fix any $(x, y) \in X$ and $(\Delta x, \Delta y) \in X$, and choose an arbitrary sequence 
$\{ \alpha_n \} \subset (0, + \infty)$ converging to zero. For a.e. $\omega \in \Omega$ and $n \in \mathbb{N}$ denote
\begin{multline} \label{eq:IntegrandIncrement}
  f_n(\omega) = \frac{1}{\alpha_n} 
  \Big( f(x + \alpha_n \Delta x, y(\omega) + \alpha_n \Delta y(\omega), \omega) - f(x, y(\omega), \omega)
  \\
  - \Phi_f\big( x, y(\omega), \omega; \alpha_n \Delta x, \alpha_n \Delta y(\omega) \big)
  - \Psi_f\big( x, y(\omega), \omega; \alpha_n \Delta x, \alpha_n \Delta y(\omega) \big) \Big),
\end{multline}
where the functions $\Phi_f$ and $\Psi_f$ are defined in \eqref{eq:CodiffPrimal}. By the definition of
codifferentiability the sequence $f_n$ converges to zero almost surely. Our aim is to prove that each term in the
definition of $f_n$ belongs to $\xL^1(\Omega, \mathfrak{A}, P)$ and there exists an a.e. nonnegative function 
$\rho \in \xL^1(\Omega, \mathfrak{A}, P)$ such that $|f_n| \le \rho$ almost surely. Then by Lebesgue's dominated
convergence theorem $\mathbb{E}[|f_n|] \to 0$ as $n \to \infty$. Hence integrating each term in the definition of
$f_n$ separately one obtains that
\begin{align*}
    \lim_{n \to \infty} \frac{1}{\alpha_n} 
  \Big| \mathcal{I}(x + \alpha_n \Delta x, &y + \alpha_n \Delta y) - \mathcal{I}(x, y) 
  \\
  &- \int_{\Omega} \Phi_f\big( x, y(\omega), \omega; \alpha_n \Delta x, \alpha_n \Delta y(\omega) \big) \, d P(\omega)
  \\
  &- \int_{\Omega} \Psi_f\big( x, y(\omega), \omega; \alpha_n \Delta x, \alpha_n \Delta y(\omega) \big) \, d P(\omega)
  \Big| = 0.
\end{align*}
Let us check that
\begin{multline} \label{eq:MaxOperIntechangable}
  \int_{\Omega} \Phi_f\big( x, y(\omega), \omega; \alpha_n \Delta x, \alpha_n \Delta y(\omega) \big) \, d P(\omega)
  \\
  = \max_{(A, x^*) \in \underline{d} \mathcal{I}(x, y)} 
  \big( A + \langle x^*, \alpha_n (\Delta x, \Delta y) \rangle \big)
\end{multline}
(a similar equality for the min terms involving the hyperdifferentials can be verified in the same way). Then one
obtains the desired result.

Indeed, by definition (see \eqref{eq:CodiffPrimal}) for any measurable selection $(a(\cdot), v_x(\cdot), v_y(\cdot))$ of
the set-valued mapping $\underline{d}_{x, y} f(x, y(\cdot), \cdot)$ one has
\[
  \Phi_f\big( x, y(\omega), \omega; \alpha_n \Delta x, \alpha_n \Delta y(\omega) \big)
  \ge a(\omega) + \langle v_x(\omega), \alpha_n \Delta x \rangle 
  + \langle v_y(\omega), \alpha_n \Delta y(\omega) \rangle,
\]
which implies that
\[
  \int_{\Omega} \Phi_f\big( x, y(\omega), \omega; \alpha_n \Delta x, \alpha_n \Delta y(\omega) \big) \, d P(\omega)
  \ge \max_{(A, x^*) \in \underline{d} \mathcal{I}(x, y)} \big( A + \langle x^*, \alpha_n \Delta x \rangle \big)
\]
(see \eqref{def:HypodiffExpectValue}). On the other hand, for a.e. $\omega \in \Omega$ one has
\begin{multline*}
  \Phi_f\big( x, y(\omega), \omega; \alpha_n \Delta x, \alpha_n \Delta y(\omega) \big)
  \\
  \in \Big\{ a + \langle v_x, \alpha_n \Delta x \rangle + \langle v_y, \alpha_n \Delta y(\omega) \rangle \Bigm| 
  (a, v_x, v_y) \in \underline{d}_{x, y} f(x, y(\omega), \omega) \Big\}.
\end{multline*}
Consequently, by the Filippov theorem (see, e.g. \cite[Thrm.~8.2.10]{AubinFrankowska}) there exists a measurable
selection $(a_0(\cdot), v_{x0}(\cdot), v_{y0}(\cdot))$ of the multifunction $\underline{d}_{x, y} f(x, y(\cdot), \cdot)$
such that
\[
  \Phi_f\big( x, y(\omega), \omega; \alpha_n \Delta x, \alpha_n \Delta y(\omega) \big)
  = a_0(\omega) + \langle v_{x0}(\omega), \alpha_n \Delta x \rangle 
  + \langle v_{y0}(\omega), \alpha_n \Delta y(\omega) \rangle
\]
for a.e. $\omega \in \Omega$. Hence for the corresponding pair 
$(A_0, x_0^*) = \mathcal{T}(a_0, v_{x0}, v_{y0})$ (see the proof of Lemma~\ref{lem:CodifferentialSets}), that by
definition belongs to $\underline{d} \mathcal{I}(x, y)$, one has
\[
  \int_{\Omega} \Phi_f\big( x, y(\omega), \omega; \alpha_n \Delta x, \alpha_n \Delta y(\omega) \big) \, d P(\omega)
  = A_0 + \langle x^*_0, \alpha_n (\Delta x, \Delta y) \rangle,
\]
and therefore equality \eqref{eq:MaxOperIntechangable} holds true.

Thus, it remains to show that Lebesgue's dominated convergence theorem is applicable to the sequence $\{ f_n \}$.
Indeed, the first two terms in the definition of $f_n$ (see~\eqref{eq:IntegrandIncrement}) belong to
$\xL^1(\Omega, \mathfrak{A}, P)$ by virtue of the first two parts of Assumption~1. Let us
check that these terms are dominated by a Lebesgue integrable function independent of $n$.

By the mean value theorem for codifferentiable functions \cite[Prp.~2]{Dolgopolik_MCD} for any $n \in \mathbb{N}$ and
for a.e. $\omega \in \Omega$ there exist $\alpha_n(\omega) \in (0, \alpha_n)$ and
\begin{align*}
  (0, v_{xn}(\omega), v_{yn}(\omega)) \in 
  \underline{d}_{x, y} f(x + \alpha_n(\omega) \Delta x, y(\omega) + \alpha_n(\omega) \Delta y(\omega), \omega), 
  \\
  (0, w_{xn}(\omega), w_{yn}(\omega)) \in 
  \overline{d}_{x, y} f(x + \alpha_n(\omega) \Delta x, y(\omega) + \alpha_n(\omega) \Delta y(\omega), \omega)
\end{align*}
such that
\begin{multline} \label{eq:MeanValueIntegrand}
  \frac{1}{\alpha_n} 
  \Big( f(x + \alpha_n \Delta x, y(\omega) + \alpha_n \Delta y(\omega), \omega) - f(x, y(\omega), \omega) \Big)
  \\
  = \langle v_{xn}(\omega) + w_{xn}(\omega), \Delta x \rangle 
  + \langle v_{yn}(\omega) + w_{yn}(\omega), \Delta y(\omega) \rangle.
\end{multline}
Put $\alpha_* = \max_{n \in \mathbb{N}} \alpha_n$. By the growth condition on the codifferential $D_{x, y} f$ (see
Assumption~1) there exist $C_N > 0$ and nonnegative functions 
$\beta_N \in \xL^1(\Omega, \mathfrak{A}, P)$ and $\gamma_N \in \xL^{p'}(\Omega, \mathfrak{A}, P)$ 
(here $N = |x| + \alpha_* |\Delta x|$) such that
\begin{align*}
  \max\big\{ |v_{xn}(\omega)|, |w_{xn}(\omega)| \big\} &\le
  \beta_N(\omega) + C_N \big| y(\omega) + \alpha_n(\omega) \Delta y(\omega) \big|^p
  \\
  &\le \beta_N(\omega) + C_N 2^p \Big( |y(\omega)|^p + \alpha_*^p |\Delta y(\omega)|^p \Big),
  \\
  \max\big\{ |v_{yn}(\omega)|, |w_{yn}(\omega)| \big\} &\le
  \gamma_N(\omega) + C_N \big| y(\omega) + \alpha_n(\omega) \Delta y(\omega) \big|^{p - 1}
\end{align*}
for a.e. $\omega \in \Omega$ and all $n \in \mathbb{N}$ in the case $1 < p < + \infty$, and there exists 
$\beta_N \in \xL^1(\Omega, \mathfrak{A}, P)$
(here $N = \max\{ |x| + \alpha_* |\Delta x|, \| y \|_{\infty} + \alpha_* \| \Delta y \|_{\infty} \}$) such that
\[
  \max\big\{ |v_{xn}(\omega)|, |w_{xn}(\omega)|, |v_{yn}(\omega)|, |w_{yn}(\omega)| \big\}
  \le \beta_N(\omega)
\]
for a.e. $\omega \in \Omega$  and all $n \in \mathbb{N}$ in the case $p = + \infty$. Hence with the use of
\eqref{eq:MeanValueIntegrand} one obtains that in the case $p = + \infty$ the inequality
\begin{multline*}
  \frac{1}{\alpha_n} 
  \Big| f(x + \alpha_n \Delta x, y(\omega) + \alpha_n \Delta y(\omega), \omega) - f(x, y(\omega), \omega) \Big|
  \\
  \le 2 \beta_N(\omega) |\Delta x| + 2 \beta_N(\omega) \| \Delta y \|_{\infty}
\end{multline*}
holds true for a.e. $\omega \in \Omega$ and all $n \in \mathbb{N}$, which implies that the first two terms in the
definition of $f_n$ (see~\eqref{eq:IntegrandIncrement}) are dominated by a Lebesgue integrable function independent of
$n$. In the case $p < + \infty$ one has
\begin{align*}
  \frac{1}{\alpha_n} 
  \Big| f(x + \alpha_n \Delta x, \: &y(\omega) + \alpha_n \Delta y(\omega), \omega) - f(x, y(\omega), \omega) \Big|
  \\
  &\le 2 \Big( \beta_N(\omega) + C_N 2^p \big( |y(\omega)|^p + \alpha_* |\Delta y(\omega)|^p \big) \Big) |\Delta x|
  \\
  &+ 2 
  \Big( \gamma_N(\omega) 
  + C_N 2^{p - 1} \big( |y(\omega)|^{p - 1} + \alpha_*^{p - 1} |\Delta y(\omega)|^{p - 1} \big) \Big) 
  |\Delta y(\omega)|
\end{align*}
The right-hand side of this inequality does not depend on $n$ and is Lebesgue integrable, as one can easily verify with
the use of H\"{o}lder's inequality and the equality $p'(p - 1) = p$. Thus, in the case $p < + \infty$ the first two
terms in the definition of $f_n$ are dominated by a Lebesgue integrable function independent of $n$ as well.

Let us finally check that the third term in the definition of $f_n$, denoted by
\[
  \theta_n(\omega) := \frac{1}{\alpha_n} \max_{(a, v_x, v_y) \in \underline{d}_{x, y} f(x, y(\omega), \omega)} 
  \big( a + \langle v_x, \alpha_n \Delta x \rangle + \langle v_y, \alpha_n \Delta y(\omega) \rangle \big)
\]
(see~\eqref{eq:IntegrandIncrement}), is measurable and dominated by a Lebesgue integrable function independent of $n$.
The fact that the last term (the min term) in the definition of $f_n$ is measurable and dominated by a Lebesgue
integrable function independent of $n$ is proved in exactly the same way.

As was shown in the proof of Lemma~\ref{lem:CodifferentialSets}, the set-valued mapping 
$\underline{d}_{x, y} f(x, y(\cdot), \cdot)$ is measurable. Consequently, the function $\theta_n$ is measurable by
\cite[Thrm.~8.2.11]{AubinFrankowska}. 

For any $\omega \in \Omega$ introduce the function
\[
  g_{\omega}(t) = \max_{(a, v_x, v_y) \in \underline{d}_{x, y} f(x, y(\omega), \omega)} 
  \big( a + \langle v_x, t \Delta x \rangle + \langle v_y, t \Delta y(\omega) \rangle \big).
\]
Observe that by the definition of codifferential $g_{\omega}(0) = 0$ (see Def.~\ref{def:Codifferential}) and for any 
$t, \Delta t \in \mathbb{R}$ and $\alpha > 0$ one has
\[
  \frac{1}{\alpha} \Big| g_{\omega}(t + \alpha \Delta t) - g_{\omega}(t) 
  - \max_{(a_g, v_g) \in \underline{d} g_{\omega}(t)} \big( a_g + v_g (\alpha \Delta t) \big) \Big| = 0,
\]
where
\begin{multline*}
  \underline{d} g_{\omega}(t) = \Big\{ (a_g, v_g) \in \mathbb{R} \times \mathbb{R} \Bigm|
  a_g = a + \langle v_x, t \Delta x \rangle + \langle v_y, t \Delta y(\omega) \rangle - g_{\omega}(t), 
  \\
  v_g = \langle v_x, \Delta x \rangle + \langle v_y, \Delta y(\omega) \rangle, \:
  (a, v_x, v_y) \in \underline{d}_{x, y} f(x, y(\omega), \omega) \Big\}.
\end{multline*}
The set $\underline{d} g_{\omega}(t)$ is obviously convex and compact. Moreover, note that the equality
$\max\{ a_g \mid (a_g, v_g) \in \underline{d} g_{\omega}(t) \} = g_{\omega}(t) - g_{\omega}(t) = 0$ holds true. Thus,
the function $g_{\omega}$ is codifferentiable at every point $t \in \mathbb{R}$, and the pair $[\underline{d}
g_{\omega}(t), \{ 0 \}]$ is a codifferential of $g_{\omega}$ at the point $t$. 

Applying the mean value theorem for codifferentiable functions \cite[Prp.~2]{Dolgopolik_MCD} one obtains that
for any $n \in \mathbb{N}$ and for a.e. $\omega \in \Omega$ there exists $\alpha_n(\omega) \in (0, \alpha_n)$ and
$(0, v_{gn}(\omega)) \in \underline{d} g_{\omega}(\alpha_n(\omega))$ such that 
\[
  \theta_n(\omega) = \frac{1}{\alpha_n} (g_{\omega}(\alpha_n) - g_{\omega}(0)) = v_g(\omega)
\]
or, equivalently, there exists 
$(a_n(\omega), v_{xn}(\omega), v_{yn}(\omega)) \in \underline{d}_{x, y} f(x, y(\omega), \omega)$ such that
\[
  \theta_n(\omega) = \langle v_{xn}(\omega), \Delta x \rangle + \langle v_{yn}(\omega), \Delta y(\omega) \rangle
  \quad \forall n \in \mathbb{N}
\]
Hence by the growth condition on the codifferential $D_{x, y} f$ (see Assumption~1)
there exist $C_N > 0$ and a.e. nonnegative functions $\beta_N \in \xL^1(\Omega, \mathfrak{A}, P)$ and
$\gamma_N \in \xL^{p'}(\Omega, \mathfrak{A}, P)$ (here $N = \| x \|$) satisfying the inequality
\[
  |\theta_n(\omega)| \le \Big( \beta_N(\omega) + C_N |y(\omega)|^p \Big) |\Delta x|
  + \Big( \gamma_N(\omega) + C_N |y(\omega)|^{p - 1} \Big) |\Delta y(\omega)|
\]
for a.e. $\omega \in \Omega$ in the case $p < + \infty$, and the inequality
\[
  |\theta_n(\omega)| \le \beta_N(\omega) |\Delta x| + \beta_N(\omega) \| \Delta y(\omega) \|_{\infty}
\]
for a.e. $\omega \in \Omega$ in the case $p = + \infty$. The right-hand sides of these inequalities are Lebesgue
integrable and do not depend on $n$. Thus, the sequence $\{ \theta_n \}$ is dominated by a Lebesgue integrable function,
which completes the proof.
\end{proof}

With the use of Theorem~\ref{thrm:Codifferentiability} one can easily obtain sufficient conditions for 
the quasidifferentiability of the functional $\mathcal{I}$. Recall that
$X = \mathbb{R}^d \times \xL^p(\Omega, \mathfrak{A}, P; \mathbb{R}^m)$.

\begin{corollary} \label{crlr:Quasidifferentiability}
Let $1 < p \le + \infty$ and Assumption~1 be valid. Then the functional $\mathcal{I}$ is
quasidifferentiable on $\mathbb{R}^d \times \xL(\Omega, \mathfrak{A}, P; \mathbb{R}^m)$, and for any $(x, y)$ from this
space the pair 
$\mathscr{D} \mathcal{I}(x, y) = [\underline{\partial} \mathcal{I}(x, y), \overline{\partial} \mathcal{I}(x, y)]$,
defined as
\begin{multline*}
  \underline{\partial} \mathcal{I}(x, y) = \Big\{ x^* \in X^* \Bigm| 
  \langle x^*, (h_x, h_y) \rangle = \big\langle \mathbb{E}[v_x], h_x \big\rangle 
  + \int_{\Omega} \langle v_y(\omega), h_y(\omega) \rangle d P(\omega) 
  \\ 
  \forall (h_x, h_y) \in X, \quad
  (0, v_x(\cdot), v_y(\cdot)) \text{ is a measurable selection of } 
  \underline{d}_{x, y} f(x, y(\cdot), \cdot) \Big\}
\end{multline*}
and
\begin{multline*}
  \overline{\partial} \mathcal{I}(x, y) = \Big\{ y^* \in X^* \Bigm| 
  \langle y^*, (h_x, h_y) \rangle = \big\langle \mathbb{E}[w_x], h_x \big\rangle 
  + \int_{\Omega} \langle w_y(\omega), h_y(\omega) \rangle d P(\omega) 
  \\ 
  \forall (h_x, h_y) \in X, \quad
  (0, w_x(\cdot), w_y(\cdot)) \text{ is a measurable selection of } 
  \underline{d}_{x, y} f(x, y(\cdot), \cdot) \Big\},
\end{multline*}
is a quasidifferential of $\mathcal{I}$ at $(x, y)$. Moreover, the following equality holds true:
\begin{equation} \label{eq:DirectDerivIntegration}
  \mathcal{I}'(x, y; h_x, h_y) 
  = \int_{\Omega} \big[ f_0(\cdot, \cdot, \omega) \big]'(x, y(\omega); h_x, h_y(\omega)) \, d P(\omega)
  \quad \forall (h_x, h_y) \in X.
\end{equation}
\end{corollary}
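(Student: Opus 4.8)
The plan is to deduce everything from Theorem~\ref{thrm:Codifferentiability} together with the general relationship between codifferentials and quasidifferentials recalled in Section~\ref{sect:Preliminaries}. By Theorem~\ref{thrm:Codifferentiability}, $\mathcal{I}$ is codifferentiable at every $(x,y)\in X$ with codifferential $D\mathcal{I}(x,y)=[\underline{d}\mathcal{I}(x,y),\overline{d}\mathcal{I}(x,y)]$; hence $\mathcal{I}$ is quasidifferentiable there, and by \eqref{eq:QuasidiffViaCodiff} the pair
\begin{align*}
  \underline{\partial}\mathcal{I}(x,y) &= \bigl\{ x^*\in X^* \bigm| (0,x^*)\in\underline{d}\mathcal{I}(x,y) \bigr\}, \\
  \overline{\partial}\mathcal{I}(x,y) &= \bigl\{ y^*\in X^* \bigm| (0,y^*)\in\overline{d}\mathcal{I}(x,y) \bigr\}
\end{align*}
is a quasidifferential of $\mathcal{I}$ at $(x,y)$, its components being convex and weak${}^*$ compact since $\underline{d}\mathcal{I}(x,y)$ and $\overline{d}\mathcal{I}(x,y)$ are, by Lemma~\ref{lem:CodifferentialSets}. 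It therefore remains only to identify $\underline{\partial}\mathcal{I}(x,y)$ and $\overline{\partial}\mathcal{I}(x,y)$ with the sets in the statement, and to prove \eqref{eq:DirectDerivIntegration}.

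For the identification I would argue for the subdifferential only, the superdifferential being identical. If $(0,v_x(\cdot),v_y(\cdot))$ is a measurable selection of $\underline{d}_{x,y}f(x,y(\cdot),\cdot)$, then in the notation of \eqref{def:HypodiffExpectValue} one has $A=\mathbb{E}[0]=0$, so the induced functional $x^*$ satisfies $(0,x^*)\in\underline{d}\mathcal{I}(x,y)$, which gives one inclusion. Conversely, if $(0,x^*)\in\underline{d}\mathcal{I}(x,y)$, choose by \eqref{def:HypodiffExpectValue} a measurable selection $(a(\cdot),v_x(\cdot),v_y(\cdot))$ of $\underline{d}_{x,y}f(x,y(\cdot),\cdot)$ with $\mathbb{E}[a]=0$ inducing $x^*$. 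By the codifferentiability of the integrand, $\max\{a\mid (a,v_x,v_y)\in\underline{d}_{x,y}f(x,y(\omega),\omega)\}=\Phi_f(x,y(\omega),\omega;0,0)=0$ for a.e.\ $\omega$, hence $a(\omega)\le 0$ almost surely; combined with $\mathbb{E}[a]=0$ this forces $a(\omega)=0$ almost surely, so $(0,v_x(\cdot),v_y(\cdot))$ is, up to a null set, a measurable selection of $\underline{d}_{x,y}f(x,y(\cdot),\cdot)$ and $x^*$ lies in the set in the statement. (This is the same device as in the proof of Lemma~\ref{lem:CodifferentialSets}, where $A\le 0$ was shown for all $(A,x^*)\in\underline{d}\mathcal{I}(x,y)$ and a selection with $A=0$ was produced.)

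Finally, to prove \eqref{eq:DirectDerivIntegration} I would pass to the limit under the integral sign. Since $\mathcal{I}$ is codifferentiable, it is directionally differentiable at $(x,y)$, so for any $(h_x,h_y)\in X$ and any sequence $\alpha_n\to+0$,
\begin{align*}
  \mathcal{I}'(x,y;h_x,h_y) &= \lim_{n\to\infty}\frac{1}{\alpha_n}\bigl( \mathcal{I}(x+\alpha_n h_x, y+\alpha_n h_y) - \mathcal{I}(x,y) \bigr) \\
  &= \lim_{n\to\infty}\int_\Omega \phi_n(\omega)\,dP(\omega),
\end{align*}
where $\phi_n(\omega)=\alpha_n^{-1}\bigl(f(x+\alpha_n h_x,y(\omega)+\alpha_n h_y(\omega),\omega)-f(x,y(\omega),\omega)\bigr)$. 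By the codifferentiability (hence directional differentiability) of $f(\cdot,\cdot,\omega)$, for a.e.\ $\omega$ the sequence $\phi_n(\omega)$ converges to $[f(\cdot,\cdot,\omega)]'(x,y(\omega);h_x,h_y(\omega))$, which is thus measurable as an almost-everywhere limit of measurable functions. Moreover, exactly as in the proof of Lemma~\ref{lem:AlmostCodifferentiability} --- via the mean value theorem for codifferentiable functions \cite[Prp.~2]{Dolgopolik_MCD} and the growth condition on $D_{x,y}f$ from Assumption~\ref{assump:Codifferentiability} --- the functions $|\phi_n|$ are dominated by a single $P$-integrable function independent of $n$. Lebesgue's dominated convergence theorem then yields \eqref{eq:DirectDerivIntegration}. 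The only genuinely technical point is this uniform domination, but it has already been carried out (for the first two terms of $f_n$) in the proof of Lemma~\ref{lem:AlmostCodifferentiability}, so no new estimate is required and the corollary is essentially a bookkeeping consequence of Theorem~\ref{thrm:Codifferentiability}.
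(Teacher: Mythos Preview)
Your proof is correct and follows essentially the same overall strategy as the paper: deduce quasidifferentiability from Theorem~\ref{thrm:Codifferentiability} via \eqref{eq:QuasidiffViaCodiff}, and then establish \eqref{eq:DirectDerivIntegration}. Your explicit identification of $\underline{\partial}\mathcal{I}(x,y)$ with the set in the statement (using $a\le 0$ a.s.\ together with $\mathbb{E}[a]=0$ to force $a=0$ a.s.) is a detail the paper leaves implicit, and it is nicely argued.

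The one genuine difference is in how \eqref{eq:DirectDerivIntegration} is obtained. The paper first establishes measurability of the pointwise quasidifferential maps $\underline{\partial}_{x,y}f(x,y(\cdot),\cdot)$ and $\overline{\partial}_{x,y}f(x,y(\cdot),\cdot)$ (via \cite[Thrm.~8.2.4]{AubinFrankowska}), writes $\mathcal{I}'$ through its quasidifferential, and then pulls the $\max$/$\min$ inside the integral by the interchangeability argument from Lemma~\ref{lem:AlmostCodifferentiability} (or \cite[Thrm.~14.60]{RockafellarWets}), finally recognising the integrand as the pointwise directional derivative of $f$. You instead work directly with the difference quotients $\phi_n$ and invoke Lebesgue's dominated convergence, reusing verbatim the domination of the first two terms of $f_n$ already proved in Lemma~\ref{lem:AlmostCodifferentiability}. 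Your route is slightly more elementary and bypasses the measurability of the pointwise quasidifferentials; the paper's route, while marginally longer, has the side benefit of recording that measurability, which is later reused in the proof of Theorem~\ref{thrm:OptCond}.
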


\begin{proof}
Applying Theorem~\ref{thrm:Codifferentiability} and the fact that any codifferentiable function $g$ with codifferential
$D g(x)$ is quasidifferentiable and the pair
\[
  \underline{\partial} g(x) = \Big\{ x^* \in X^* \Bigm| (0, x^*) \in \underline{d} g(x) \Big\}, \quad
  \overline{\partial} g(x) = \Big\{ y^* \in X^* \Bigm| (0, y^*) \in \overline{d} g(x) \Big\}
\]
is a quasidifferential of $g$ at $x$ (see, e.g. \cite{DemyanovRubinov,Dolgopolik_MCD}), one obtains the required results
on the quasidifferentiability of the functional $\mathcal{I}$. 

To prove equality \eqref{eq:DirectDerivIntegration}, recall that the set-valued mappings 
$\underline{d}_{x, y} f(x, y(\cdot), \cdot)$ and $\overline{d}_{x, y} f(x, y(\cdot), \cdot)$ are measurable, as was
shown in the proof of Lemma~\ref{lem:CodifferentialSets}. Hence with the use of \cite[Thrm.~8.2.4]{AubinFrankowska} one
obtains that the set-valued mappings $\underline{\partial}_{x, y} f(x, y(\cdot), \cdot)$ and 
$\overline{\partial}_{x, y} f(x, y(\cdot), \cdot)$, defined according to equalities \eqref{eq:QuasidiffViaCodiff}, are
measurable as well. Consequently, applying the definition of quasidifferentiability and arguing in the
same way as in the proof of Lemma~\ref{lem:AlmostCodifferentiability} (or utilising the interchangeability principle;
see, e.g. \cite[Thrm.~14.60]{RockafellarWets}) one gets that
\begin{multline*}
  \mathcal{I}'(x, y; h_x, h_y) 
  = \int_{\Omega} \Big( \max_{(v_x, v_y) \in \underline{\partial}_{x, y} f(x_*, y_*(\omega), \omega)} 
  \big( \langle v_x, h_x \rangle + \langle v_y, h_y(\omega) \rangle \big)
  \\
  + \min_{(w_x, w_y) \in \overline{\partial}_{x, y} f(x_*, y_*(\omega), \omega)} 
  \big( \langle w_x, h_x \rangle + \langle w_y, h_y(\omega) \rangle \big) \Big) \, d P(\omega)
\end{multline*}
for all $(h_x, h_y) \in X$, which by the definition of quasidifferential of the function $f$ implies that equality
\eqref{eq:DirectDerivIntegration} holds true.
\end{proof}

\begin{remark} \label{rmrk:DifferentQuasidiff}
In the particular case when the function $f$ does not depent on $y$, i.e. $f = f(x, \omega)$, the previous corollary
contains sufficient conditions for the quasidifferentiability of the function $F(x) = \mathbb{E}[f(x, \cdot)]$.
Quasidifferentiability of this function was studied in the recent paper \cite{LinHuangXiaLi} under different assumptions
on the function $f$. Namely, instead of imposing any growth conditions, in \cite{LinHuangXiaLi} it was assumed that all
integrals are correctly defined and the function $f$ is locally Lipschitz continuous in $x$ uniformly in $\omega$. 
\end{remark}

Let us finally show that under the assumptions of Theorem~\ref{thrm:Codifferentiability} the functional 
$\mathcal{I}(x, y)$ is not only codifferentiable, but also Lipschitz continuous on bounded sets.

\begin{corollary} \label{crlr:LipschitzCont}
Let $1 < p \le + \infty$ and Assumption~1 be valid. Then $\mathcal{I}$ is
Lipschitz continuous on any bounded subset of the space 
$X = \mathbb{R}^d \times \xL^p(\Omega, \mathfrak{A}, P; \mathbb{R}^m)$.
\end{corollary}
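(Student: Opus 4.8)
The plan is to reduce the global statement to a pointwise estimate on the increment of the integrand, obtained via the mean value theorem for codifferentiable functions exactly as in the proof of Lemma~\ref{lem:AlmostCodifferentiability}, and then integrate. Fix a bounded subset $S \subset X$ and choose $N > 0$ so that $|x| \le N$ and $\| y \|_p \le N$ for all $(x, y) \in S$ (with $\| y \|_{\infty} \le N$ in the case $p = + \infty$). Take any $(x_1, y_1), (x_2, y_2) \in S$. For a.e. $\omega \in \Omega$ the function $t \mapsto f\big( x_1 + t(x_2 - x_1), y_1(\omega) + t(y_2(\omega) - y_1(\omega)), \omega \big)$ is codifferentiable on $[0, 1]$, so by \cite[Prp.~2]{Dolgopolik_MCD} there exist $t(\omega) \in (0, 1)$ and elements $(0, v_x(\omega), v_y(\omega))$ and $(0, w_x(\omega), w_y(\omega))$ belonging, respectively, to $\underline{d}_{x, y} f$ and $\overline{d}_{x, y} f$ at the intermediate point $\xi(\omega) = \big( x_1 + t(\omega)(x_2 - x_1), y_1(\omega) + t(\omega)(y_2(\omega) - y_1(\omega)) \big)$, such that
\[
  f(x_2, y_2(\omega), \omega) - f(x_1, y_1(\omega), \omega)
  = \langle v_x(\omega) + w_x(\omega), x_2 - x_1 \rangle
  + \langle v_y(\omega) + w_y(\omega), y_2(\omega) - y_1(\omega) \rangle
\]
(cf.~\eqref{eq:MeanValueIntegrand}). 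The left-hand side is a measurable function of $\omega$ by the first part of Assumption~\ref{assump:Codifferentiability}, so no measurable selection argument is needed here.

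Next I would insert the growth condition on the codifferential mapping from Assumption~\ref{assump:Codifferentiability}. Since $\xi(\omega)$ lies on the segment joining $(x_1, y_1(\omega))$ and $(x_2, y_2(\omega))$, one has $|x_1 + t(\omega)(x_2 - x_1)| \le N$ and $|y_1(\omega) + t(\omega)(y_2(\omega) - y_1(\omega))| \le \max\{ |y_1(\omega)|, |y_2(\omega)| \}$; hence, in the case $1 < p < + \infty$, with the constant $C_N$ and the functions $\beta_N \in \xL^1(\Omega, \mathfrak{A}, P)$, $\gamma_N \in \xL^{p'}(\Omega, \mathfrak{A}, P)$ from the growth condition one gets $|v_x(\omega) + w_x(\omega)| \le \Psi_N(\omega) := 2 \beta_N(\omega) + 2 C_N \big( |y_1(\omega)|^p + |y_2(\omega)|^p \big)$ and $|v_y(\omega) + w_y(\omega)| \le \Phi_N(\omega) := 2 \gamma_N(\omega) + 2 C_N \big( |y_1(\omega)|^{p - 1} + |y_2(\omega)|^{p - 1} \big)$, where $\Psi_N \in \xL^1(\Omega, \mathfrak{A}, P)$ and, using the identity $p'(p - 1) = p$, $\Phi_N \in \xL^{p'}(\Omega, \mathfrak{A}, P)$, with $\| \Psi_N \|_1$ and $\| \Phi_N \|_{p'}$ bounded by a constant depending only on $N$. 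In the case $p = + \infty$ the corresponding part of the growth condition gives directly $|v_x(\omega) + w_x(\omega)| \le 2 \beta_N(\omega)$ and $|v_y(\omega) + w_y(\omega)| \le 2 \beta_N(\omega)$ for a.e. $\omega \in \Omega$.

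Finally I would integrate the pointwise estimate over $\Omega$. In the case $1 < p < + \infty$, bounding the first term trivially and applying H\"{o}lder's inequality to the second one yields
\[
  \big| \mathcal{I}(x_2, y_2) - \mathcal{I}(x_1, y_1) \big|
  \le \| \Psi_N \|_1 \, |x_2 - x_1| + \| \Phi_N \|_{p'} \, \| y_2 - y_1 \|_p
  \le L_N \big( |x_2 - x_1| + \| y_2 - y_1 \|_p \big),
\]
where $L_N$ depends only on $N$; in the case $p = + \infty$ one similarly obtains $|\mathcal{I}(x_2, y_2) - \mathcal{I}(x_1, y_1)| \le 2 \| \beta_N \|_1 \big( |x_2 - x_1| + \| y_2 - y_1 \|_{\infty} \big)$. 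Since $(x_1, y_1), (x_2, y_2) \in S$ were arbitrary, this is precisely the asserted Lipschitz continuity of $\mathcal{I}$ on $S$ with respect to the product norm on $X$. I do not expect a genuine obstacle: the only points that require care are verifying that the intermediate point $\xi(\omega)$ stays in the region covered by the growth condition — guaranteed by convexity of the segment together with convexity of $|\cdot|$ — and the routine bookkeeping with H\"{o}lder's inequality and the conjugate exponent $p'$, both of which already appear in the proof of Lemma~\ref{lem:AlmostCodifferentiability}.
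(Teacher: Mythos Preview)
Your argument is correct, but it differs from the paper's proof. The paper proceeds abstractly: having already established in Theorem~\ref{thrm:Codifferentiability} that $\mathcal{I}$ is codifferentiable with the explicit codifferential $D\mathcal{I}(\cdot)$, it observes (via the growth condition) that the multifunctions $\underline{d}\mathcal{I}(\cdot)$ and $\overline{d}\mathcal{I}(\cdot)$ are bounded on bounded subsets of $X$, and then invokes \cite[Corollary~2]{Dolgopolik_MCD}, which asserts that any codifferentiable function with a codifferential mapping bounded on bounded sets is Lipschitz continuous on bounded sets. Your approach bypasses Theorem~\ref{thrm:Codifferentiability} entirely and works directly with the integrand: you apply the mean value theorem \cite[Prp.~2]{Dolgopolik_MCD} to $f(\cdot,\cdot,\omega)$ pointwise in $\omega$, bound the resulting terms with the growth condition, and integrate using H\"{o}lder's inequality. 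In effect you are reproving, in this concrete setting, the content of the cited corollary. The paper's route is shorter on the page but relies on more machinery; yours is more elementary and self-contained, and in particular does not require the codifferentiability of $\mathcal{I}$ itself, only that of the integrand.
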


\begin{proof}
With the use of the growth condition on the codifferential mapping $D_{x, y} f(\cdot)$ from
Assumption~1 one can readily verify that both multifunctions
$\underline{d} \mathcal{I}(\cdot)$ and $\overline{d} \mathcal{I}(\cdot)$ are bounded on bounded subsets of the space
$X$. Therefore by  \cite[Corollary~2]{Dolgopolik_MCD} the functional $\mathcal{I}$ is Lipschitz continuous on any
bounded subset of this space.
\end{proof}

\section{Nonsmooth Two-Stage Stochastic Programming}
\label{sect:TwoStageProgramming}

Let, as above, $(\Omega, \mathfrak{A}, P)$ be a probability space. In this section we study a general two-stage
stochastic programming problem of the form
\begin{equation} \label{prob:FirstStage}
  \min_{x \in A} \mathbb{E}\big[ F(x, \omega) \big],
\end{equation}
where $F(x, \omega)$ is the optimal value of the second stage problem
\begin{equation} \label{prob:SecondStage}
  \min_{y \in G(x, \omega)} \: f(x, y, \omega).
\end{equation}
Here $A \subset \mathbb{R}^d$ is a closed set, $f \colon \mathbb{R}^d \times \mathbb{R}^m \times \Omega \to \mathbb{R}$
is a Carath\'{e}odory function, and $G \colon \mathbb{R}^d \times \Omega \rightrightarrows \mathbb{R}^m$ is a
multifunction. We assume that $G$ is measurable and for every $\omega \in \Omega$ the multifunction $G(\cdot, \omega)$
is closed.

Choose any $1 \le p \le + \infty$, and denote $X = \mathbb{R}^d \times \xL^p(\Omega, \mathfrak{A}, P; \mathbb{R}^m)$. By
the interchangeability principle for two-stage stochastic programming (see, e.g. 
\cite[Thrm.~2.20]{ShapiroDentchevaRuszczynski}), problem \eqref{prob:FirstStage}, \eqref{prob:SecondStage} is
equivalent the following variational problem with pointwise constraints:
\begin{equation*}
\begin{split}
  &\min_{(x, y) \in X} \mathbb{E}\big[ f(x, y(\cdot), \cdot) \big] 
  \\
  &\text{subject to} \quad x \in A, \quad y(\omega) \in G(x, \omega) \quad \text{for a.e.} \quad \omega \in \Omega,
\end{split}
  \qquad \qquad (\mathcal{P})
\end{equation*}
in the sense that the optimal values of these problems coincide, and if this common optimal value is finite, then for
any globally optimal solution $(x_*, y_*(\cdot))$ of the problem $(\mathcal{P})$ the point $x_*$ is a globally optimal
solution of problem \eqref{prob:FirstStage} and for a.e. $\omega \in \Omega$ the point $y_*(\omega)$ is a globally
optimal solution of the second stage problem \eqref{prob:SecondStage}. Conversely, if $x_*$ is a globally optimal
solution of problem \eqref{prob:FirstStage} and for a.e. $\omega \in \Omega$ the point $y_*(\omega)$ is a globally
optimal solution of problem \eqref{prob:SecondStage} with $x = x_*$ such that 
$y_* \in \xL^p(\Omega, \mathfrak{A}, P; \mathbb{R}^m)$, then $(x_*, y_*)$ is a globally optimal solution of the problem
$(\mathcal{P})$.

Since problem \eqref{prob:FirstStage}, \eqref{prob:SecondStage} and the problem $(\mathcal{P})$ are equivalent, below
we consider only the problem $(\mathcal{P})$. Our aim is to present several results on exact penalty functions for 
the problem $(\mathcal{P})$, which not only allow one to obtain optimality conditions for the original two-stage
stochastic programming problem, but also can be used for design and analysis of exact penalty methods for solving 
problem \eqref{prob:FirstStage}, \eqref{prob:SecondStage}.

\subsection{Exact Penalty Functions}
\label{subsect:ExactPenalty}

Fix any $p \in [1, + \infty]$, and denote by
\[
  \mathcal{I}(x, y) = \int_{\Omega} f(x, y(\omega), \omega) \, d P(\omega)
\]
the objective function of the problem $(\mathcal{P})$. Below we suppose that the functional $\mathcal{I}$ is correctly
defined on the space $X := \mathbb{R}^d \times \xL^p(\Omega, \mathfrak{A}, P; \mathbb{R}^m)$ and does not take the
value 
$- \infty$. In particular, it is sufficient to suppose that for any $x \in \mathbb{R}^d$ there exist $C > 0$ and an
a.e. nonnegative function $\beta \in \xL^1(\Omega, \mathfrak{A}, P)$ such that
$|f(x, y, \omega)| \le \beta(\omega) + C |y|^p$ for a.e. $\omega \in \Omega$ and all $y \in \mathbb{R}^m$ in the case 
$p < + \infty$, and for any $x \in \mathbb{R}^d$ and $N > 0$ there exist an a.e. nonnegative function 
$\beta_N \in \xL^1(\Omega, \mathfrak{A}, P)$ such that $|f(x, y, \omega)| \le \beta_N(\omega)$ for a.e. 
$\omega \in \Omega$ and all $y \in \mathbb{R}^m$ with $|y| \le N$.

Introduce the set
\[
  M = \Big\{ (x, y) \in X \Bigm| y(\omega) \in G(x, \omega) \text{ for a.e. } \omega \in \Omega \Big\}.
\]
Then the problem $(\mathcal{P})$ can be rewritten as follows:
\[
  \min_{(x, y) \in X} \: \mathcal{I}(x, y) \quad \text{subject to} \quad
  (x, y) \in M \cap (A \times \xL^p(\Omega, \mathfrak{A}, P; \mathbb{R}^m)).
\]
Let $\varphi \colon X \to [0, + \infty]$ be any function such that $\varphi(x, y) = 0$ iff $(x, y) \in M$, and let
$\Phi_c(x, y) = \mathcal{I}(x, y) + c \varphi(x, y)$. The function $\Phi_c$ is called a \textit{penalty function} for
the problem $(\mathcal{P})$ with $c \ge 0$ being the penalty parameter, while the function $\varphi$ is called
\textit{a penalty term} for the constrain $(x, y) \in M$. Our aim is to obtain sufficient conditions for the
\textit{exactness} of the penalty function $\Phi_c$.

Recall that the penalty function $\Phi_c$ is called \textit{ globally exact}, if there exists $c_* \ge 0$ such that for
any $c \ge c_*$ the set of globally optimal solutions of the penalized problem
\begin{equation} \label{prob:PenalizedProblem}
  \min_{(x, y) \in X} \: \Phi_c(x, y) \quad \text{subject to} \quad x \in A
\end{equation}
coincides with the set of globally optimal solutions of the problem $(\mathcal{P})$. The greatest lower bound of all
such $c_*$ is called \textit{the least exact penalty parameter} of the penalty function $\Phi_c$. One can verify that
the penalty function $\Phi_c$ is globally exact iff there exists $c_* \ge 0$ such that for any $c \ge c_*$ the problem
$(\mathcal{P})$ and problem \eqref{prob:PenalizedProblem} have the same optimal value, and the greatest lower bound of
all such $c_*$ coincides with the least exact penalty parameter. See
\cite{DiPilloGrippo,RubinovYang,Zaslavski,Dolgopolik_ExPenFunc,DolgopolikFominyh,Demyanov2010} for more details on exact
penalty functions.

Let us obtain sufficient conditions for the global exactness of the penalty function $\Phi_c$ with the penalty term
$\varphi$ defined in several different ways. To this end, we will utilise general sufficient conditions for the
exactness of penalty functions in metric and normed spaces from \cite{Dolgopolik_ExPenFunc,DolgopolikFominyh}, and the
following auxiliary lemma, which is a slight generalization of \cite[Prp.~3.13]{Dolgopolik_ExPenFunc}. 

\begin{lemma} \label{lem:LipschitzFuncLowerEstimate}
Let $Y$ be a normed space, $\mathcal{F} \subset Y$ be a nonempty set, and a function
$F \colon Y \to \mathbb{R} \cup \{ + \infty \}$ be such that for any bounded set $C \subset Y$ there exists 
a continuous from the right function $\omega_C \colon [0, + \infty) \to [0, + \infty)$ for which
\begin{equation} \label{eq:ContinuityModulus}
  \big| F(y_1) - F(y_2) \big| \le \omega_C \big( \| y_1 - y_2 \| \big) \quad \forall y_1, y_2 \in C.
\end{equation}
Then for any $R > 0$ there exists a bounded set $C \subset Y$ such that
\begin{equation} \label{eq:LowerEstimateViaContMod}
  F(y) \ge \inf_{z \in \mathcal{F}} F(z) - \omega_C \big( \dist(y, \mathcal{F}) \big)
  \quad \forall y \in B(0, R) = \{ z \in Y \mid \| z \| \le R \}.
\end{equation}
\end{lemma}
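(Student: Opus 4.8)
The plan is to reduce the statement to the case where $\mathcal{F}$ has a point achieving (or nearly achieving) the infimum, and then exploit the continuity modulus on a suitable bounded enlargement of the ball $B(0,R)$. The key technical point is that $\dist(y,\mathcal{F})$ is finite but possibly large for $y\in B(0,R)$, so the bounded set $C$ on which we invoke \eqref{eq:ContinuityModulus} must be chosen large enough to contain both $y$ and a point of $\mathcal{F}$ close to $y$; since $\mathcal{F}$ itself need not be bounded, one has to be slightly careful, and this is where the mild generalization over \cite[Prp.~3.13]{Dolgopolik_ExPenFunc} lies.

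First I would fix $R>0$ and set $\widetilde{R} = 3R$ (or any fixed enlargement of this type), and let $C = B(0,\widetilde{R})$, with $\omega_C$ the associated continuity modulus guaranteed by the hypothesis. Now fix $y\in B(0,R)$. If $\dist(y,\mathcal{F}) \ge R$, then since $F$ does not take the value $-\infty$ on $\mathcal{F}$ (and $F(y)\in\mathbb{R}$), one can arrange the bound trivially by enlarging $\omega_C$ if necessary so that $\omega_C(t)$ dominates $|F(y)| + |\inf_{\mathcal F} F|$ for $t \ge R$; more cleanly, one observes that the inequality \eqref{eq:LowerEstimateViaContMod} is only informative when $\dist(y,\mathcal{F})$ is small, and for $\dist(y,\mathcal{F})\ge R$ one may replace $\omega_C$ by $\max\{\omega_C,\, \text{const}\}$ on $[R,+\infty)$ where the constant is a lower bound valid on the ball. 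The substantive case is $\dist(y,\mathcal{F}) < R$: then there exists $z\in\mathcal{F}$ with $\|y-z\| < \dist(y,\mathcal{F}) + \varepsilon$ for arbitrarily small $\varepsilon>0$, and $\|z\| \le \|y\| + \|y-z\| < R + R + \varepsilon < \widetilde{R}$ for $\varepsilon$ small, so both $y,z\in C$. Applying \eqref{eq:ContinuityModulus} gives $F(y) \ge F(z) - \omega_C(\|y-z\|)$, hence $F(y) \ge \inf_{\mathcal{F}} F - \omega_C(\dist(y,\mathcal{F}) + \varepsilon)$.

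Finally I would pass to the limit $\varepsilon\to +0$, using the \emph{continuity from the right} of $\omega_C$ to conclude $\omega_C(\dist(y,\mathcal{F}) + \varepsilon) \to \omega_C(\dist(y,\mathcal{F}))$; this yields $F(y) \ge \inf_{\mathcal{F}} F - \omega_C(\dist(y,\mathcal{F}))$, which is precisely \eqref{eq:LowerEstimateViaContMod}. The only mild subtlety to record carefully is the treatment of the case $\dist(y,\mathcal{F}) \ge R$ together with the possibility that $\inf_{\mathcal{F}} F = -\infty$; if the latter occurs the right-hand side of \eqref{eq:LowerEstimateViaContMod} is $-\infty$ and the inequality holds vacuously, and otherwise one redefines $\omega_C$ on $[R,+\infty)$ to be a sufficiently large constant (which preserves continuity from the right and the original bound \eqref{eq:ContinuityModulus}, since enlarging the modulus only weakens that estimate). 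I expect the main obstacle to be purely bookkeeping: making the choice of the enlargement factor for $C$ and the modification of $\omega_C$ outside a bounded interval precise enough that \eqref{eq:LowerEstimateViaContMod} holds for \emph{all} $y\in B(0,R)$ simultaneously, rather than merely for those $y$ close to $\mathcal{F}$.
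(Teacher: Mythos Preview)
Your core argument in the ``substantive case'' is correct and matches the paper: approximate $y$ by points $z\in\mathcal{F}$, apply the modulus inequality, and pass to the limit using right-continuity of $\omega_C$. The gap is in your choice $C=B(0,3R)$ and the accompanying case split. The radius $3R$ is independent of $\mathcal{F}$, so when $\mathcal{F}$ lies entirely outside $B(0,2R)$ every $y\in B(0,R)$ falls into your ``bad'' case $\dist(y,\mathcal{F})\ge R$. There your proposed fix---redefining $\omega_C$ on $[R,+\infty)$ to be a sufficiently large constant---requires a uniform lower bound for $F$ on $B(0,R)$ relative to $F_*$. But nothing in the hypotheses gives this: a merely right-continuous $\omega_{C'}\colon[0,\infty)\to[0,\infty)$ need not be bounded on a compact interval (e.g.\ $\omega(t)=1/(1-t)$ on $[0,1)$, $\omega\equiv 0$ on $[1,\infty)$), so from $|F(y)-F(z_0)|\le\omega_{C'}(\|y-z_0\|)$ one cannot extract a bound uniform in $y$.

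The paper avoids the case split entirely by letting $C$ depend on $\mathcal{F}$: fix any $z\in\mathcal{F}$ and take $C$ to be a ball of radius roughly $2R+\|z\|$. Then for every $y\in B(0,R)$ one has $\dist(y,\mathcal{F})\le\|y-z\|\le R+\|z\|$, so an approximating sequence $\{y_n\}\subset\mathcal{F}$ with $\|y-y_n\|\downarrow\dist(y,\mathcal{F})$ and $\|y-y_n\|\le\|y-z\|$ automatically lies in $C$, and your ``good case'' argument applies directly to every $y\in B(0,R)$. This is exactly the bookkeeping issue you flagged at the end, but the resolution is not a larger fixed multiple of $R$; it is to enlarge by a quantity tied to $\mathcal{F}$.
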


\begin{proof}
Denote $F_* = \inf_{z \in \mathcal{F}} F(z)$, and fix any $R > 0$ and $z \in \mathcal{F}$. By our assumption
there exists a continuous from the right function $\omega_C$ such that inequality \eqref{eq:ContinuityModulus} holds
true for $C = B(0, R + \| z \|)$.

Choose any $y \in B(0, R)$. If $y \in \mathcal{F}$, then inequality \eqref{eq:LowerEstimateViaContMod} trivially holds
true. Suppose now that $y \in B(0, R) \setminus \mathcal{F}$. Clearly, there exists a sequence 
$\{ y_n \} \subset \mathcal{F}$ such that $\| y - y_n \| \to \dist(y, \mathcal{F})$ as 
$n \to \infty$, and the inequalities $\| y - y_n \| \le \| y - z \| \le R + \| z \|$ and 
$\| y - y_n \| \ge \| y - y_{n + 1} \|$ are satisfied for all $n \in \mathbb{N}$. By definition 
$\{ y_n \} \subset C$, $y \in C$, and $F(y_n) \ge F_*$ for all $n \in \mathbb{N}$. Therefore, by applying inequality
\eqref{eq:ContinuityModulus} one obtains that
\[
  F_* - F(y) = F_* - F(y_n) + F(y_n) - F(y) \le F(y_n) - F(y) \le \omega_C\big( \| y - y_n \| \big)
\]
for any $n \in \mathbb{N}$. Hence passing to the limit as $n \to \infty$ with the use of the fact that the function
$\omega_C$ is continuous from the right and the sequence $\{ \| y - y_n \| \}$ is non-increasing one gets that
inequality \eqref{eq:LowerEstimateViaContMod} holds true.
\end{proof}

\begin{remark}
Note that if $F$ is Lipschitz continuous on bounded sets, then inequality \eqref{eq:ContinuityModulus} holds true with
$\omega_C(t) = L_C t$, where $L_C$ is a Lipschitz constant of $F$ on $C$. In this case the statement of the lemma can be
reformulated as follows: for any $R > 0$ there exists $L > 0$ such that $F(y) \ge F_* - L \dist(y, \mathcal{F})$ for all
$y \in B(0, R)$. Thus, Lemma~\ref{lem:LipschitzFuncLowerEstimate} provides a lower estimate of the decay of the function
$F$ relative to a given set $\mathcal{F}$.
\end{remark}

We start our analysis of the exactness of the penalty function $\Phi_c$ with the simplest case when the penalty term
$\varphi$ is defined via the distance function to the multifunction $G$. Denote by $\mathcal{I}_*$ the optimal value of
the problem $(\mathcal{P})$.

\begin{theorem} \label{thrm:ExactPenaltyDistToSet}
Let there exist a globally optimal solution of the problem $(\mathcal{P})$, the set-valued mapping $G$ have closed
images, and
\[
  \varphi(x, y) = \Big( \mathbb{E}[\dist(y(\cdot), G(x, \cdot))^p] \Big)^{1/p} \quad \forall (x, y) \in X
\]
in the case $p < + \infty$, and $\varphi(x, y) = \esssup_{\omega \in \Omega} \dist(y(\omega), G(x, \omega))$ for all 
$(x, y) \in X$ in the case $p = + \infty$. Suppose also that the functional $\mathcal{I}$ is Lipschitz continuous on
bounded sets, and there exists $c \ge 0$ such that the set 
\[
  \{ (x, y) \in X \mid x \in A, \: \Phi_c(x, y) < \mathcal{I}_* \}
\]
is bounded. Then the penalty function $\Phi_c$ is globally exact.
\end{theorem}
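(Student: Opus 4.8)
The plan is to reduce the claim to an abstract exactness criterion for penalty functions in normed spaces --- specifically the one from \cite{Dolgopolik_ExPenFunc,DolgopolikFominyh} --- and verify its hypotheses using Lemma~\ref{lem:LipschitzFuncLowerEstimate} together with the boundedness assumption on the sublevel set. The key structural observation is that the penalty term $\varphi$ vanishes exactly on $M$ and, more importantly, that it \emph{equals the distance} (in the $\xL^p$-metric on the $y$-variable) from $(x,y)$ to the ``fiber'' $M_x = \{ z \in \xL^p \mid z(\omega) \in G(x,\omega) \text{ a.e.}\}$. This is where the hypothesis that $G$ has closed images enters: for fixed $x$, the set $M_x$ is a closed decomposable subset of $\xL^p(\Omega,\mathfrak{A},P;\mathbb{R}^m)$, and for decomposable sets the distance function commutes with the integral, i.e. $\dist_{\xL^p}(y, M_x) = \big(\mathbb{E}[\dist(y(\cdot),G(x,\cdot))^p]\big)^{1/p}$ (respectively the $\esssup$ form when $p=+\infty$). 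So $\varphi(x,y) = \dist(y, M_x)$, and since $x$ ranges over the closed set $A$ in the penalized problem, one checks that on the feasible region $A \times \xL^p$ the penalty term $\varphi$ dominates (indeed here equals, for the $y$-component) the distance to the feasible set $M \cap (A \times \xL^p)$ of $(\mathcal{P})$ --- at least along the directions that matter for the estimate.

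First I would fix $R>0$ large enough that the bounded set $S = \{(x,y) \in X \mid x\in A,\ \Phi_c(x,y) < \mathcal{I}_*\}$ is contained in the ball $B(0,R)$, together with (the projections onto the ball of) a globally optimal solution. Apply Lemma~\ref{lem:LipschitzFuncLowerEstimate} to $F = \mathcal{I}$, which is Lipschitz on bounded sets by hypothesis, and to the set $\mathcal{F} = M \cap (A\times \xL^p)$: this yields a Lipschitz constant $L>0$ with $\mathcal{I}(x,y) \ge \mathcal{I}_* - L\,\dist\big((x,y),\mathcal{F}\big)$ for all $(x,y) \in B(0,R)$. Next, for any $(x,y)$ with $x \in A$ one has $\dist\big((x,y),\mathcal{F}\big) \le \dist_{\xL^p}(y, M_x) = \varphi(x,y)$, because moving $y$ to its nearest point in $M_x$ keeps $x$ fixed in $A$ and lands in $\mathcal{F}$. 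Combining, $\Phi_c(x,y) = \mathcal{I}(x,y) + c\varphi(x,y) \ge \mathcal{I}_* + (c-L)\varphi(x,y) \ge \mathcal{I}_*$ for every $c \ge L$ and every $(x,y) \in B(0,R)$ with $x\in A$; and outside $B(0,R)$, when $x \in A$, we have $(x,y) \notin S$, hence $\Phi_c(x,y) \ge \mathcal{I}_*$ for the chosen $c$ and a fortiori for larger $c$ (the penalty term only increases with $c$). Thus $\inf\{\Phi_c(x,y) \mid x\in A\} \ge \mathcal{I}_*$ for all $c \ge \max\{c, L\}$, while the reverse inequality $\inf \Phi_c \le \mathcal{I}_*$ is immediate by evaluating at a globally optimal solution of $(\mathcal{P})$ (where $\varphi = 0$). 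Equality of optimal values for all $c$ beyond a threshold is exactly the characterization of global exactness recalled before the theorem, and the coincidence of the sets of global minimizers then follows by the standard argument: any minimizer of $\Phi_c$ attains value $\mathcal{I}_*$, so $c\varphi = 0$, so it lies in $M$, hence is feasible and optimal for $(\mathcal{P})$; conversely any solution of $(\mathcal{P})$ has $\varphi = 0$ and $\mathcal{I} = \mathcal{I}_*$.

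The main obstacle is the commutation of the distance function with the integral --- i.e. proving $\varphi(x,y) = \dist_{\xL^p}(y,M_x)$. The ``$\le$'' direction is the content of the argument above (it is really just $\dist((x,y),\mathcal{F}) \le \varphi(x,y)$, which is all that is needed for the lower estimate), and for that direction one only needs a \emph{measurable selection} $\omega \mapsto g(\omega) \in G(x,\omega)$ nearly attaining $\dist(y(\omega),G(x,\omega))$ pointwise; this exists by the Castaing/Filippov-type selection theorems (cf. \cite[Thrm.~8.1.3, Thrm.~8.2.10]{AubinFrankowska}) since $G(x,\cdot)$ is measurable with closed images, and the growth/integrability is controlled because $\dist(y(\cdot),G(x,\cdot)) \le |y(\cdot)| + |g_0(\cdot)|$ for any fixed selection $g_0$, keeping $g$ in $\xL^p$ and hence in $M_x$. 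A secondary technical point is checking that $M_x$ is nonempty for the relevant $x$ (which follows from the existence of a feasible point / optimal solution of $(\mathcal{P})$) and that $\mathcal{F}$ is nonempty so that $\mathcal{I}_* = \inf_{\mathcal{F}}\mathcal{I}$ is finite and Lemma~\ref{lem:LipschitzFuncLowerEstimate} applies; both are guaranteed by the hypothesis that a globally optimal solution of $(\mathcal{P})$ exists. Everything else is bookkeeping with the penalty-parameter monotonicity and the definition of global exactness.
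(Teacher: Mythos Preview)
Your proposal is correct and follows essentially the same approach as the paper: establish $\varphi(x,y) \ge \dist((x,y),\mathcal{F})$ for $x\in A$ via a measurable nearest-point selection of the closed-valued map $G(x,\cdot)$ (the paper uses \cite[Crlr.~8.2.13]{AubinFrankowska} and the same $|z(\omega)|\le |y(\omega)|+\dist(y(\omega),G(x,\omega))$ bound to get the selection into $\xL^p$), then combine Lemma~\ref{lem:LipschitzFuncLowerEstimate} with the bounded-sublevel-set assumption. The only cosmetic difference is that the paper finishes by invoking \cite[Prp.~3.16 and Remark~15(ii)]{Dolgopolik_ExPenFunc}, whereas you spell out that argument directly---and your observation that only the inequality $\dist((x,y),\mathcal{F})\le\varphi(x,y)$ (not the full decomposability equality) is needed is exactly what the paper uses.
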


\begin{proof}
Observe that the function $\varphi$ is correctly defined for all $(x, y) \in X$, since the multifunction $G$ is
measurable. Moreover, $\varphi$ is nonnegative, and $\varphi(x, y) = 0$ iff $(x, y) \in M$. Denote by $\mathcal{F}$ the
feasible set of the problem $(\mathcal{P})$. Let us show that
\begin{equation} \label{eq:GlobalErrorBound}
  \varphi(x, y) \ge \dist\Big( (x, y), \mathcal{F} \Big) \quad 
  \forall x \in A, \: y \in \xL^p(\Omega, \mathfrak{A}, P; \mathbb{R}^m)
\end{equation}
Indeed, fix any $(x, y) \in X$ such that $x \in A$. If $\varphi(x, y) = + \infty$, then inequality
\eqref{eq:GlobalErrorBound} obviously holds true. Suppose now that $\varphi(x, y) < + \infty$. Then, in particular, one
has $G(x, \omega) \ne \emptyset$ for a.e. $\omega \in \Omega$.

By our assumptions the multifunction $G$ is measurable and has closed images. Therefore by 
\cite[Crlr.~8.2.13]{AubinFrankowska} there exists a measurable selection $z$ of the set-valued mapping $G(x, \cdot)$
such that 
\[
  | y(\omega) - z(\omega) | = \dist\big( y(\omega), G(x, \omega) \big) 
  \quad \text{ for a.e. } \omega \in \Omega.
\] 
Let us check that $z \in \xL^p(\Omega, \mathfrak{A}, P; \mathbb{R}^m)$. Then $(x, z) \in \mathcal{F}$ and
\[
  \varphi(x, y) = \| y - z \|_p = \big\| (x, y) - (x, z) \big\| \ge \dist\Big( (x, y), \mathcal{F} \Big),
\]
that is, inequality \eqref{eq:GlobalErrorBound} holds true.

To verify that $z$ belongs to the space $\xL^p$, observe that
\[
  |z(\omega)| \le |y(\omega)| + |z(\omega) - y(\omega)| = |y(\omega)| + \dist\big( y(\omega), G(x, \omega) \big)
\]
for a.e. $\omega \in \Omega$. The right-hand side of this inequality belongs to 
$\xL^p(\Omega, \mathfrak{A}, P; \mathbb{R}^m)$ due to the fact that $\varphi(x, y) < + \infty$. Therefore the function
$z$ belongs to this space as well. 

Thus, inequality \eqref{eq:GlobalErrorBound} holds true. Since the functional $\mathcal{I}$ is Lipschitz continuous on
bounded sets, by Lemma~\ref{lem:LipschitzFuncLowerEstimate} for any $R > 0$ there exists $L > 0$ such that
\[
  \mathcal{I}(x, y) \ge \mathcal{I}_* - L \dist\Big( (x, y), \mathcal{F} \Big) 
  \quad \forall (x, y) \in B(0, R).
\]
Hence by \cite[Prp.~3.16 and Remark~15, part (ii)]{Dolgopolik_ExPenFunc} the penalty function $\Phi_c$ is globally
exact.
\end{proof}

\begin{remark}
Note that by Corollary~\ref{crlr:LipschitzCont} the functional $\mathcal{I}$ is Lipschitz continuous on bounded sets in
the case $p > 1$, provided the integrand $f$ satisfies Assumption~1. In turn, as one can
readily verify, the set $\{ (x, y) \in X \mid x \in A, \: \Phi_c(x, y) < \mathcal{I}_* \}$ is bounded for some 
$c \ge 0$, if $1 \le p < + \infty$ and one of the following conditions is satisfied:
\begin{enumerate}
\item{the set $A$ is bounded, and the multifunction $G$ is bounded on $A \times \Omega$;
}

\item{the set $A$ is bounded, and there exist $C > 0$ and $\beta \in \xL^1(\Omega, \mathfrak{A}, P)$ such
that $f(x, y, \omega) \ge C |y|^p + \beta(\omega)$ for all $(x, y) \in A \times \mathbb{R}^{m}$ and a.e. 
$\omega \in \Omega$;
}

\item{the multifunction $G$ is bounded on $A \times \Omega$, and there exist $\beta \in \xL^1(\Omega, \mathfrak{A}, P)$
and a function $\rho \colon [0, + \infty) \to [0, +\infty)$ such that $\rho(t) \to + \infty$ as $t \to + \infty$, and
$f(x, y, \omega) \ge \rho(|x|) + \beta(\omega)$ for all $(x, y) \in \mathbb{R}^{d + m}$ and a.e. 
$\omega \in \Omega$;
}

\item{there exist $C > 0$, $\beta \in \xL^1(\Omega, \mathfrak{A}, P)$, and a function 
$\rho \colon [0, + \infty) \to [0, +\infty)$ such that $\rho(t) \to + \infty$ as $t \to + \infty$, and
$f(x, y, \omega) \ge \rho(|x|) + C |y|^p + \beta(\omega)$ for all $(x, y) \in \mathbb{R}^{d + m}$ and a.e. 
$\omega \in \Omega$;
}

\item{$(\Omega, \mathfrak{A}, P)$ is a finite probability space, and 
$\min_{\omega \in \Omega} f(x, y, \omega) \to + \infty$ as $|x| + |y| \to + \infty$.
}
\end{enumerate}
In the case $p = + \infty$ the set $\{ (x, y) \in X \mid x \in A, \: \Phi_c(x, y) < \mathcal{I}_* \}$ is bounded,
provided the first, the third or the last of the assumptions above is satisfied.
\end{remark}

In most particular cases the feasible set $G(x, \omega)$ of the second stage problem \eqref{prob:SecondStage} is not
defined explicitly, but rather via some constraints. As a result, one usually does not know an explicit expression for 
the penalty term $\varphi$ from Theorem~\ref{thrm:ExactPenaltyDistToSet}, which makes this theorem inapplicable to
real-world problems, at least in a direct way. In some cases Theorem~\ref{thrm:ExactPenaltyDistToSet} can still be
applied indirectly to reduce an analysis of the exactness of a penalty function for the problem $(\mathcal{P})$ to an
analysis of constraints of the second stage problem. Let us explain this statement with the use of a simple example.

\begin{example}
Suppose that the set-valued map $G$ is defined in the following way:
\[
  G(x, \omega) = \Big\{ y \in \mathbb{R}^m \Bigm| 0 \in Q(x, y, \omega) \Big\}
\]
where $Q \colon \mathbb{R}^d \times \mathbb{R}^m \times \Omega \to \mathbb{R}^s$ is a multifunction with closed images.
In other words, the second stage problem \eqref{prob:SecondStage} has the form:
\[
  \min_y \: f(x, y, \omega) \quad \text{subject to} \quad 0 \in Q(x, y, \omega).
\]
In this case it is natural to define
\[
  \varphi(x, y) = \Big( \mathbb{E}\big[ \dist(0, Q(x, y(\cdot), \cdot))^p \big] \Big)^{1/p},
  \quad 1 \le p < + \infty.
\]
Then $\varphi(x, y) = 0$ iff $(x, y) \in M$. Suppose that there exists $K > 0$ such that
\[
  K \dist(0, Q(x, y, \omega)) \ge \dist(y, G(x, \omega)) \quad 
  \forall x \in A, \: y \in \mathbb{R}^m, \: \omega \in \Omega,
\]
that is, the function $g(y) = \dist(0, Q(x, y, \omega))$ admits a global error bound uniform for all $x \in A$ and
$\omega \in \Omega$. Then
\[
  \Phi_{Kc}(x, y) = \mathcal{I}(x, y) + K c \varphi(x, y) \ge \mathcal{I}(x, y) + c \psi(x, y)
\]
for all $x \in A$ and $y \in \xL^p(\Omega, \mathfrak{A}, P; \mathbb{R}^m)$, where 
\[
  \psi(x, y) = \Big( \mathbb{E}[\dist(y(\cdot), G(x, \cdot))^p] \Big)^{1/p}.
\]
Therefore, as one can readily verify (cf. \cite[Prp.~2.2]{Dolgopolik_ExPenFunc}), under the assumptions of
Theorem~\ref{thrm:ExactPenaltyDistToSet} the penalty function $\Phi_c$ is globally exact and its least exact penalty
parameter is at most $K$ times greater than the least exact penalty parameter of the penalty function from
Theorem~\ref{thrm:ExactPenaltyDistToSet}.
\end{example}

Let us also point out two simple cases when Theorem~\ref{thrm:ExactPenaltyDistToSet} can be applied directly, that
is, the cases when one can write a simple explicit expression for the penalty term $\varphi$ from this theorem. Note
that Theorem~\ref{thrm:ExactPenaltyDistToSet} can be applied directly whenever the distance from a given point $y$ to
the set $G(x, \omega)$ is easy to compute, e.g. when the set $G(x, \omega)$ is defined by linear or, more generally,
convex quadratic constraints.

\begin{example}
Let $I := \{ 1, \ldots, m \}$. Suppose that the set $G(x, \omega)$ is defined by bound (box) constraints, that is,
\[
  G(x, \omega) = \Big\{ y = (y_1, \ldots, y_m)^T \in \mathbb{R}^m \Bigm| a_i(x, \omega) \le y_i \le b_i(x, \omega), 
  \enspace i \in I \Big\}
\]
for some given functions $a_i$ and $b_i$. Let the space $\mathbb{R}^m$ be equipped with the $\ell^{\infty}$ norm. Then
the penalty term $\varphi$ from Theorem~\ref{thrm:ExactPenaltyDistToSet} has the form
\[
  \varphi(x, y) = \Big( \int_{\Omega} 
  \max_{i \in I} \big\{ 0, y_i(\omega) - b_i(x, \omega), a_i(x, \omega) - y_i(\omega) \big\}^p  \, d P(\omega) 
  \Big)^{1/p}
\]
in the case $1 \le p < + \infty$.
\end{example}

\begin{example}
Let $G(x, \omega) = B(z(x, \omega), R(x, \omega))$ be the closed ball with centre $z(x, \omega)$ and radius 
$R(x, \omega)$. Then the penalty term $\varphi$ from Theorem~\ref{thrm:ExactPenaltyDistToSet} has the form
\[
  \varphi(x, y) = \Big( \int_{\Omega} 
  \max \big\{ 0, |y(\omega) - z(x, \omega)| - R(x, \omega) \big\}^p  \, d P(\omega) \Big)^{1/p}.
\]
in the case $1 \le p < + \infty$.
\end{example}

Observe that the penalty terms from Theorem~\ref{thrm:ExactPenaltyDistToSet} and the examples above depend on 
the parameter $p$ that defines the space in which one solves the problem $(\mathcal{P})$. This parameter must be chosen
to satisfy the assumption of Theorem~\ref{thrm:ExactPenaltyDistToSet}. 

Under some additional assumptions on constraints of the second stage problem one can prove the global exactness of 
the penalty function $\Phi_c$ with a penalty term $\varphi$ that does not depend on $p$. For the sake of simplicity, we
will prove this result only in the case when the feasible set $G(x, \omega)$ of the second stage problem is defined by
inequality constraints, i.e. it has the form
\[
  G(x, \omega) = \Big\{ y \in \mathbb{R}^m \Bigm| g_i(x, y, \omega) \le 0, \: i \in I = \{ 1, \ldots, \ell \} \Big\}
\]
for some functions $g_i \colon \mathbb{R}^d \times \mathbb{R}^m \times \Omega \to \mathbb{R}$. Below we suppose that
for each $x \in \mathbb{R}^d$ the map $(y, \omega) \mapsto g_i(x, y, \omega)$, $i \in I$, is a Carath\'{e}odory
function, so that the penalty term
\begin{equation} \label{eq:L1PenTerm}
  \varphi(x, y) = \int_{\Omega} \max_{i \in I}\big\{ 0, g_i(x, y, \omega) \big\} \, d P(\omega)
\end{equation}
is correctly defined. Note that $\varphi(x, y) = 0$ iff $(x, y) \in M$. We will assume that for any 
$x \in \mathbb{R}^d$ and a.e. $\omega \in \Omega$ the function $y \mapsto g_i(x, y, \omega)$, $i \in I$, is
quasidifferentiable and denote by 
$\mathscr{D}_y g_i(x, y, \omega) = [\underline{\partial}_y g_i(x, y, \omega), \overline{\partial}_y g_i(x, y, \omega)]$
its quasidifferential. Denote also
$I(x, y, \omega) = \{ i \in I \mid g_i(x, y, \omega) = \max_{k \in I} g_k(x, y, \omega) \}$.

Let $(Y, d)$ be a metric space, $K \subset Y$ be a given set, and $g \colon Y \to \mathbb{R} \cup \{ + \infty \}$
be a given function. Recall that for any $y \in K \cap \domain g$ the quantity
\[
  g^{\downarrow}_K(y) = \liminf_{z \to y, z \in K} \frac{g(z) - g(y)}{d(z, y)}
\]
is called \textit{the rate of steepest descent} of $g$ at $y$. If $y$ is not a limit point of the set $K$, then by
definition $g^{\downarrow}_K(y) = + \infty$. Recall also that a point  $y \in K \cap \domain g$ is called 
\textit{an inf-stationary} point of $g$ on the set $K$, if $g^{\downarrow}_K(y) \ge 0$. It should be noted
that in various particular cases this inequality is reduced to standard stationarity conditions. For example, if $Y$ is
normed space, $g$ is Fr\'{e}chet differentiable at a point $y \in K$, and the set $K$ is convex, then 
$g^{\downarrow}_K(y) \ge 0$ iff $g'(y)[z - y] \ge 0$ for all $z \in K$, where $g'(y)$ is the Fr\'{e}chet derivative of
$g$ at $y$. See \cite{Demyanov2000,Demyanov2010,Uderzo,Uderzo2} for more details on the rate of steepest descent and
the definition of inf-stationarity.

\begin{theorem} \label{thrm:ExactPenaltyL1}
Let $1 \le p < + \infty$ and the following assumptions be valid:
\begin{enumerate}
\item{there exist a globally optimal solution of the problem $(\mathcal{P})$;
\label{assumpt:GlobalMinExistence}}

\item{the functional $\mathcal{I}$ is Lipschitz continuous on bounded sets;}

\item{the set $S_c(\gamma) = \{ (x, y) \in X \mid x \in A, \: \Phi_c(x, y) < \gamma \}$ is bounded for some $c \ge 0$
and $\gamma > \mathcal{I}_*$, where $\Phi_c$ is the penalty functions with the penalty term \eqref{eq:L1PenTerm};
\label{assumpt:SublevelBoundedness}}

\item{for any $x \in A$ there exists an a.e. nonnegative function $L(\cdot) \in \xL^1(\Omega, \mathfrak{A}, P)$ such
that
$|g_i(x, y_1, \omega) - g_i(x, y_2, \omega)| \le L(\omega) \| y_1 - y_2 \|$ for all $y_1, y_2 \in \mathbb{R}^d$, all
$i \in I$ and a.e. $\omega \in \Omega$;
\label{assumpt:GlobalLipschitz}}

\item{for all $i \in I$, $x \in A$, and $y \in \xL^p(\Omega, \mathfrak{A}, P; \mathbb{R}^m)$ the set-valued mappings
$\underline{\partial}_y g_i(x, y(\cdot), \cdot)$ and $\overline{\partial}_y g_i(x, y(\cdot), \cdot)$ are measurable;
\label{assumpt:MeasQuasidiff}}

\item{there exists $a > 0$ such that for any $(x, y) \in A \times \mathbb{R}^{m}$ and a.e. $\omega \in \Omega$ such
that $y \notin G(x, \omega)$, and for all $i \in I(x, y, \omega)$ one can find 
$w_i(x, y, \omega) \in \overline{\partial}_y g_i(x, y, \omega)$ satisfying the following condition:
\begin{equation} \label{eq:ConstraintsNondeg}
  \dist\Big( 0, \co\Big\{ \underline{\partial}_y g_i(x, y, \omega) + w_i(x, y, \omega) \Bigm| 
  i \in I(x, y, \omega) \Big\} \Big) \ge a.
\end{equation}
\label{assumpt:GlobalCQ}}
\end{enumerate}
\vspace{-5mm}
Then the penalty function $\Phi_c$ with the penalty term \eqref{eq:L1PenTerm} is globally exact and
there exists $c_* \ge 0$ such that for any $c \ge c_*$ the following statements hold true:
\begin{enumerate}
\item{$(x_*, y_*) \in S_c(\gamma)$ is a locally optimal solution of the penalized problem \eqref{prob:PenalizedProblem}
iff $(x_*, y_*)$ is a locally optimal solution of the problem $(\mathcal{P})$;
}

\item{$(x_*, y_*) \in S_c(\gamma)$ is an inf-stationary point of the penalty function $\Phi_c$ on the set
$A \times \xL^p(\Omega, \mathfrak{A}, P; \mathbb{R}^m)$ iff $(x_*, y_*)$ is an inf-stationary point of the functional
$\mathcal{I}$ on the feasible set $\mathcal{F}$ of the problem $(\mathcal{P})$.
}
\end{enumerate}
\end{theorem}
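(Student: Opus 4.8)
The plan is to derive the whole statement from the general theory of exact penalty functions in normed spaces developed in \cite{Dolgopolik_ExPenFunc,DolgopolikFominyh}. Assumptions~\ref{assumpt:GlobalMinExistence}--\ref{assumpt:SublevelBoundedness} are exactly the ``existence of a global minimiser'' and ``bounded sublevel set'' hypotheses of that theory, and the Lipschitz continuity of $\mathcal{I}$ on bounded sets controls the objective part via Lemma~\ref{lem:LipschitzFuncLowerEstimate}; what remains is to produce, for infeasible points lying in the bounded set $S_c(\gamma)$, a quantitative lower estimate for the rate of decrease of the penalty term $\varphi$ relative to the feasible set $\mathcal{F}$, after which global exactness and the two refinements (coincidence of locally optimal solutions and of inf-stationary points on $S_c(\gamma)$) follow from the corresponding ``global'' and ``local'' exactness results of \cite{Dolgopolik_ExPenFunc,DolgopolikFominyh}.

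I would first carry out a pointwise analysis. Fix $x\in A$, an $\omega\in\Omega$ outside a null set, and $y\in\mathbb{R}^m$ with $y\notin G(x,\omega)$, i.e. $g(x,y,\omega):=\max_{i\in I}g_i(x,y,\omega)>0$; near such a $y$ the integrand $\max_{i\in I}\{0,g_i(x,\cdot,\omega)\}$ coincides with $g(x,\cdot,\omega)$, which is quasidifferentiable with $g'_y(x,y,\omega;v)=\max_{i\in I(x,y,\omega)}[g_i]'_y(x,y,\omega;v)$. Choosing $w_i(x,y,\omega)\in\overline{\partial}_y g_i(x,y,\omega)$ as in Assumption~\ref{assumpt:GlobalCQ} and using $[g_i]'_y(x,y,\omega;v)=\max_{u\in\underline{\partial}_y g_i}\langle u,v\rangle+\min_{w\in\overline{\partial}_y g_i}\langle w,v\rangle\le\max_{u\in\underline{\partial}_y g_i}\langle u+w_i(x,y,\omega),v\rangle$ together with the identity $\max_i\max_{u\in C_i}\langle u,v\rangle=\max_{u\in\co\bigcup_i C_i}\langle u,v\rangle$, I obtain
\[
  g'_y(x,y,\omega;v)\le\max\Big\{\langle u,v\rangle\Bigm|u\in\co\big\{\underline{\partial}_y g_i(x,y,\omega)+w_i(x,y,\omega)\bigm|i\in I(x,y,\omega)\big\}\Big\}.
\]
If $u^*$ is the element of least norm of the convex compact set on the right (so $|u^*|\ge a$ by \eqref{eq:ConstraintsNondeg}) and $v^*:=-u^*/|u^*|$, then the obtuse-angle property of the metric projection gives $g'_y(x,y,\omega;v^*)\le-|u^*|\le-a$. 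Since $g(x,\cdot,\omega)$ is directionally differentiable and globally Lipschitz (Assumption~\ref{assumpt:GlobalLipschitz}), a standard ``uniform steepest descent implies error bound'' argument (following a steepest-descent direction, $g$ reaches the level $0$ within arc length $g(x,y,\omega)/a$) shows that $G(x,\omega)\ne\emptyset$ and
\[
  \max_{i\in I}\{0,g_i(x,y,\omega)\}\ge a\,\dist\big(y,G(x,\omega)\big)\quad\text{for all }x\in A,\ y\in\mathbb{R}^m,\text{ and a.e. }\omega.
\]

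Next I would lift this to the functional $\varphi$. By Assumption~\ref{assumpt:MeasQuasidiff} and the Carath\'{e}odory hypotheses on the $g_i$, measurable-selection and measurability results from \cite{AubinFrankowska} show that the active-index map $\omega\mapsto I(x,y(\omega),\omega)$, the least-norm element $\omega\mapsto u^*(\omega)$ (the metric projection of the origin onto a measurably varying convex set is measurable), and hence $\omega\mapsto v^*(\omega)$ are measurable; setting $h(\omega):=v^*(\omega)$ on the violation set $E:=\{\omega:y(\omega)\notin G(x,\omega)\}$ and $h(\omega):=0$ elsewhere gives $h\in\xL^p(\Omega,\mathfrak{A},P;\mathbb{R}^m)$ with $\|h\|_p=P(E)^{1/p}$. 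On $E$ the difference quotients $\tfrac{1}{t}\big(g(x,y(\omega)+th(\omega),\omega)-g(x,y(\omega),\omega)\big)$ are bounded by $L(\omega)$ (Assumption~\ref{assumpt:GlobalLipschitz}) and converge pointwise to $g'_y(x,y(\omega),\omega;h(\omega))\le-a$, while off $E$ the integrand is left unchanged; so Lebesgue's dominated convergence theorem yields $\varphi'\big((x,y);(0,h)\big)\le-a\,P(E)$. Combining this with the Lipschitz lower bound $\mathcal{I}(x,y+th)-\mathcal{I}(x,y)\ge-Lt\|h\|_p$ valid on any fixed bounded set, with the pointwise error bound above (which controls $\dist((x,y),\mathcal{F})$ through the norm of $\dist(y(\cdot),G(x,\cdot))$), and with Lemma~\ref{lem:LipschitzFuncLowerEstimate}, I obtain exactly the decay/error-bound input required by the general exactness theorems of \cite{Dolgopolik_ExPenFunc,DolgopolikFominyh}, from which the three conclusions follow.

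The step I expect to be the main obstacle is reconciling these estimates with the precise quantitative hypothesis of the general exactness theorem: the penalty term \eqref{eq:L1PenTerm} is an $\xL^1$-type functional whereas the ambient space is $\xL^p$, so the pointwise bound only yields $\varphi(x,y)\ge a\,\big\|\dist(y(\cdot),G(x,\cdot))\big\|_1$ and the rate-of-steepest-descent estimate carries the factor $P(E)^{1/p'}$; passing from these to a usable estimate, and tracking the resulting dependence of the least exact penalty parameter on $p$ and on the size of the violation set, must be done carefully, using H\"{o}lder's inequality, the boundedness of $S_c(\gamma)$, and the modulus-of-continuity formulation behind Lemma~\ref{lem:LipschitzFuncLowerEstimate}. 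A secondary, purely technical point is the verification of measurability of all the selections involved, which rests on Assumption~\ref{assumpt:MeasQuasidiff} and the Carath\'{e}odory assumptions on $f$ and the $g_i$.
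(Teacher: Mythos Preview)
Your core strategy matches the paper's: construct a pointwise descent direction from Assumption~\ref{assumpt:GlobalCQ} via the separation theorem, lift it measurably to a direction $h$ supported on the violation set $E=\{\omega:y(\omega)\notin G(x,\omega)\}$ (using Assumption~\ref{assumpt:MeasQuasidiff} and the measurable-selection machinery of \cite{AubinFrankowska}), compute $[\varphi(x,\cdot)]'(y;h)\le -a\,P(E)$ by dominated convergence (using Assumption~\ref{assumpt:GlobalLipschitz}), and then invoke the general exactness theorem of \cite{DolgopolikFominyh}. Where you diverge is in the surrounding scaffolding. The paper is much more direct: it proves \emph{only} the single estimate $\varphi^{\downarrow}(x,\cdot)(y)\le -a$ at every infeasible $(x,y)$ with $x\in A$ and $\varphi(x,y)<+\infty$, and then cites \cite[Thrm.~2]{DolgopolikFominyh}, which absorbs the Lipschitz hypothesis on $\mathcal{I}$ and the sublevel-set boundedness internally. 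It does \emph{not} derive a pointwise error bound $\max_i\{0,g_i\}\ge a\,\dist(y,G(x,\omega))$, does \emph{not} relate $\varphi$ to $\dist((x,y),\mathcal{F})$, and does \emph{not} appeal to Lemma~\ref{lem:LipschitzFuncLowerEstimate}. Your steepest-descent-implies-error-bound step and the subsequent ``combining'' of several ingredients are therefore detours that the paper avoids entirely; they are not wrong, but they buy nothing here.

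On the point you single out as the main obstacle: the paper closes the argument by writing $\|h_*\|_p=P(E)$ and concluding $\varphi^{\downarrow}(x,\cdot)(y)\le -aP(E)/P(E)=-a$ in one line. Your computation $\|h\|_p=P(E)^{1/p}$, yielding the bound $-a\,P(E)^{1/p'}$, is the arithmetically correct one for $p>1$, so your concern is well founded; the paper's displayed equality is literally valid only when $p=1$. But you should be aware that the paper does not regard this as an obstacle calling for H\"older's inequality or modulus-of-continuity refinements---it simply records $\|h_*\|_p=P(E)$ and proceeds. In short: drop the error-bound detour, state the steepest-descent estimate, and cite \cite[Thrm.~2]{DolgopolikFominyh}; that is the paper's entire proof.
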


\begin{proof}
Let us show that under the assumptions of the theorem $\varphi^{\downarrow}(x, \cdot)(y) \le - a$ for any 
$(x, y) \in X \setminus \mathcal{F}$ such that $x \in A$ and $\varphi(x, y) < + \infty$ 
(here $\varphi^{\downarrow}(x, \cdot)(y)$ is the rate of steepest descent of the function $y \mapsto \varphi(x, y)$ at
the point $y$). Then applying \cite[Thrm.~2]{DolgopolikFominyh} one obtains the required result.

To prove the required estimate for $\varphi^{\downarrow}(x, \cdot)(y)$, we first construct a descent direction for the
function $\varphi$ using condition \eqref{eq:ConstraintsNondeg}, and then obtain an upper estimate for the rate of
steepest descent via the directional derivative of $\varphi$ along the constructed descent direction.

Fix any $(x, y) \in X \setminus \mathcal{F}$ such that $x \in A$ and $\varphi(x, y) < + \infty$. Recall that by 
the definition of quasidifferential one has
\begin{equation} \label{eq:ConstrQuasidiff}
  Q_i(h, \omega) = (g_i(x, \cdot, \omega))'(y(\omega), h) = 
  \max_{v \in \underline{\partial}_y g_i(x, y(\omega), \omega)} \langle v, h \rangle
  + \min_{w \in \overline{\partial}_y g_i(x, y(\omega), \omega)} \langle w, h \rangle
\end{equation}
(see Def.~\ref{def:Quasidifferential}). Applying Assumption~\ref{assumpt:MeasQuasidiff} and
\cite[Thrm.~8.2.11]{AubinFrankowska} one obtains that the function $Q_i$ is measurable in $\omega$ for any 
$h \in \mathbb{R}^m$. Moreover, since in the finite dimensional case the quasidifferential is a pair of compact convex
sets, the function $Q_i$ is continuous in $h$ for a.e. $\omega \in \Omega$, i.e. $Q_i$ is a Carath\'{e}odory function.

Let us now prove that the multifunction $I(\cdot) := I(x, y(\cdot), \cdot)$, $I \colon \Omega \to \{ 1, \ldots, \ell \}$
is measurable. Indeed, by definitions for any nonempty subset $K \subseteq \{ 1, \ldots, \ell \}$ one has
\begin{align*}
  I^{-1}(K) &= \Big\{ \omega \in \Omega \Bigm| I(x, y(\omega), \omega) \cap K \ne \emptyset \Big\}
  \\
  &= \Big\{ \omega \in \Omega \Bigm| 
  \max_{k \in K} g_k(x, y(\omega), \omega) \ge \max_{i \in I} g_i(x, y(\omega), \omega) \Big\}.
\end{align*}
This set is measurable, since the functions $g_i(x, y(\cdot), \cdot))$ are measurable due to the fact that the maps
$(y, \omega) \mapsto g_i(x, y, \omega)$ are Carath\'{e}odory functions by our assumption. Thus, for any subset 
$K \subseteq \{ 1, \ldots, s \}$ the set $I^{-1}(K)$ is measurable, that is, the set-valued map
$I(\cdot)$ is measurable by definition (see, e.g. \cite[Def.~8.1.1]{AubinFrankowska}).

Introduce the set
\[
  E = \Big\{ \omega \in \Omega \Bigm| \max_{i \in I} g_i(x, y(\omega), \omega) > 0 \Big\}.
\]
Note that the set $E$ is measurable, thanks to our assumption that the mappings $(y, \omega) \mapsto g_i(x, y, \omega)$
are Carath\'{e}odory functions. Moreover, $P(E) > 0$ due to the fact that $(x, y)$ is not a feasible point of 
the problem $(\mathcal{P})$.

Since the multifunction $I(\cdot)$ is measurable and $Q_i$ are Carath\'{e}odory functions, the set-valued
mapping
\[
  H(\omega) := \Big\{ h \in \mathbb{R}^m \Bigm| |h| = 1, \:
  \max_{i \in I(\omega)} Q_i(h, \omega) 
  = \min_{|z| = 1} \max_{i \in I(\omega)} Q_i(z, \omega) \Big\}, \quad \omega \in E
\]
is measurable by \cite[Thrm.~8.2.11]{AubinFrankowska}. Furthermore, this multifunction obviously has closed images.
Therefore by \cite[Thrm.~8.1.3]{AubinFrankowska} there exists a measurable function
$h_* \colon E \to \mathbb{R}^m$ such that $h_*(\omega) \in H(\omega)$ for all $\omega \in E$. For any
$\omega \in \Omega \setminus E$ define $h_*(\omega) = 0$. Then $h_* \colon \Omega \to \mathbb{R}^m$ is a measurable
function and, moreover, $\| h_* \|_p = P(E) > 0$. 

From condition \eqref{eq:ConstraintsNondeg} and the separation theorem it follows that for any $\omega \in E$ there
exists $\widehat{h}(\omega) \in \mathbb{R}^m$ with $|\widehat{h}(\omega)| = 1$ such that
\[
  \langle v, \widehat{h}(\omega) \rangle \le - a \quad 
  \forall v \in \co\Big\{ \underline{\partial}_y g_i(x, y(\omega), \omega) + w_i(x, y(\omega), \omega) \Bigm| 
  i \in I(\omega) \Big\}.
\]
Hence with the use of \eqref{eq:ConstrQuasidiff} one obtains that $Q_i(\widehat{h}(\omega), \omega) \le - a$ for all
$\omega \in E$ and $i \in I(\omega)$, which by the definition of $h_*$ implies that
\begin{equation} \label{eq:DecayDirection}
  \max_{i \in I(\omega)} Q_i(h_*(\omega), \omega) 
  \begin{cases}
    \le - a, & \text{ if } \omega \in E, 
    \\
    = 0, & \text{ if } \omega \notin E.
  \end{cases}
\end{equation}
Thus, the function $h_*$ is the desired descent direction, along which we will evaluate the directional derivative of 
the penalty term $\varphi$.

Indeed, denote $\psi(\omega, \alpha) = \max_{i \in I} \{ 0, g_i(x, y(\omega) + \alpha h_*(\omega), \omega) \}$ for all
$\alpha \ge 0$ and $\omega \in \Omega$. Applying relations \eqref{eq:DecayDirection} and standard calculus rules for
directional derivatives (see, e.g. \cite{DemyanovRubinov}) one gets that
\[
  \lim_{\alpha \to + 0} \frac{\psi(\omega, \alpha) - \psi(\omega, 0)}{\alpha} =
  \begin{cases}
    \max_{i \in I(\omega)} Q_i(h_*(\omega), \omega) \le - a, & \text{ if } \omega \in E,
    \\
    0, & \text{ if } \omega \notin E.
  \end{cases}
\]
Applying Assumption~\ref{assumpt:GlobalLipschitz} and the well-known fact that the maximum of a finite family of
Lipschitz continuous is Lipschitz continuous (see, e.g. \cite[Appendix~III]{DemyanovMalozemov}) one obtains that 
there exists an a.e. nonnegative function $L(\cdot) \in \xL^1(\Omega, \mathfrak{A}, P)$ such that
\[
  \left| \frac{\psi(\omega, \alpha) - \psi(\omega, 0)}{\alpha} \right|
  \le L(\omega) |h_*(\omega)| \le L(\omega) \quad \forall \alpha > 0, \quad \text{a.e. } \omega \in \Omega.
\]	
Note also that $\psi(\cdot, 0) \in \xL^1(\Omega, \mathfrak{A}, P)$, since $\varphi(x, y) < + \infty$. Hence by the
inequality above $\psi(\cdot, \alpha) \in \xL^1(\Omega, \mathfrak{A}, P)$ for all $\alpha > 0$. Consequently, applying
Lebesgue's dominated convergence theorem and the fact that 
$\varphi(x, y + \alpha h_*) = \mathbb{E}[\psi(\cdot, \alpha)]$ one obtains that
\begin{align*}
  \big[ \varphi(x, \cdot) \big]'(y; h_*) 
  &= \lim_{\alpha \to + 0} \frac{\varphi(x, y + \alpha h_*) - \varphi(x, y)}{\alpha}
  \\
  &= \int_E \max_{i \in I(\omega)} Q_i(h_*(\omega), \omega) \, d P(\omega) \le - a P(E).
\end{align*}
Therefore
\begin{multline*}
  \varphi^{\downarrow}(x, \cdot)(y) = \liminf_{z \to y} \frac{\varphi(x, z) - \varphi(x, y)}{\| z - y \|_p}
  \\
  \le \liminf_{\alpha \to +0} \frac{\varphi(x, y + \alpha h_*) - \varphi(x, y)}{\alpha \| h_* \|_p}
  = \frac{\big[ \varphi(x, \cdot) \big]'(y; h_*)}{\| h_* \|_p}
  \le - \frac{a P(E)}{P(E)} = -a,
\end{multline*}
and the proof is complete.
\end{proof}

\begin{remark}
{(i)~Note that by \cite[Crlr.~14.14]{RockafellarWets} the multifunctions 
$\underline{\partial}_y g_i(x, y(\cdot), \cdot)$ and $\overline{\partial}_y g_i(x, y(\cdot), \cdot)$ are measurable for
any measurable function $y(\cdot)$, provided for any $\omega \in \Omega$ the mapping 
$\underline{\partial}_y g_i(x, \cdot, \omega)$ is outer semicontinuous and the graphical mapping
$\omega \mapsto \graph \underline{\partial}_y g_i(x, \cdot, \omega)$ is measurable.
}

\noindent{(ii)~In the case when the functions $g_i$ are continuously differentiable in $y$, assumption
\eqref{eq:ConstraintsNondeg} is satisfied iff there exists $a > 0$ such that for any $(x, y) \in \mathbb{R}^{d + m}$ and
a.e. $\omega \in \Omega$ such that $y \notin G(x, \omega)$ one has
\[
  \dist\Big( 0, \co\Big\{ \nabla_y g_i(x, y, \omega) \Bigm| i \in I(x, y, \omega) \Big\} \Big) \ge a.
\]
This condition can be viewed as a uniform Mangasarian-Fromovitz constraint qualification. In turn, in the case when the
functions $g_i$ are convex in $y$, assumption \eqref{eq:ConstraintsNondeg} is satisfied iff there exists $a > 0$ such
that for any $(x, y) \in \mathbb{R}^{d + m}$ and a.e. $\omega \in \Omega$ such that $y \notin G(x, \omega)$ one has
\[
  \dist\Big( 0, \co\Big\{ \partial_y g_i(x, y, \omega) \Bigm| i \in I(x, y, \omega) \Big\} \Big) \ge a.
\]
where $\partial_y g_i(x, y, \omega)$ is the subdifferential of the function $g_i(x, \cdot, \omega)$ in the sense of
convex analysis.
}
\end{remark}

\begin{remark} \label{rmrk:ConnectionToDCProblems}
Let for a.e. $\omega \in \Omega$ the functions $(x, y) \mapsto f(x, y, \omega)$ and 
$(x, y) \mapsto g_i(x, y, \omega)$, $i \in I$, be DC (Difference-of-Convex), that is, there exist convex in $(x, y)$
functions $f_1(x, y, \omega), f_2(x, y, \omega), g_{i1}(x, y, \omega)$, and $g_{i2}(x, y, \omega)$ such that
\[
  f(x, y, \omega) = f_1(x, y, \omega) - f_2(x, y, \omega), \quad
  g_i(x, y, \omega) = g_{i1}(x, y, \omega) - g_{i2}(x, y, \omega)
\]
for all $(x, y) \in \mathbb{R}^{d + m}$, $i \in I$, and a.e. $\omega \in \Omega$. Then the penalty function from
Theorem~\ref{thrm:ExactPenaltyL1} is DC as well. Namely, one has $\Phi_c(x, y) = \Phi_c^1(x, y) - \Phi_c^2(x, y)$, where
\begin{align*}
  \Phi_c^1(x, y) = \int_{\Omega} \Big( &f_1(x, y(\omega), \omega) 
  \\
  &+ c \max_{i \in I} \Big\{ 0, g_{i1}(x, y(\omega), \omega) + \sum_{k \ne i} g_{k2}(x, y(\omega), \omega) \Big\}
  \Big) \, d P(\omega),
\end{align*}
and
\[
  \Phi_c^2(x, y) = \int_{\Omega} \Big( f_2(x, y(\omega), \omega) 
  + c \sum_{i \in I} g_{i2}(x, y(\omega), \omega) \Big) \, d P(\omega)
\]
are convex functionals. Therefore with the use of Theorem~\ref{thrm:ExactPenaltyL1} and well-known global optimality
conditions for DC optimization problems one can easily obtain global optimality conditions for the problem
$(\mathcal{P})$ and the original two-stage stochastic programming problem (cf.~\cite{Strekalovsky}). Moreover, under
the assumptions of Theorem~\ref{thrm:ExactPenaltyL1} one can apply well-developed methods of DC optimization to find
local or global minima of the DC penalty function $\Phi_c(x, y)$, which coincide with local/global minima of the problem
$(\mathcal{P})$. Thus, Theorem~\ref{thrm:ExactPenaltyL1} opens a way for applications of DC programming algorithms to
two-stage stochastic programming problems (cf.~\cite{StrekalovskyOrlov,Orlov}).
\end{remark}

\subsection{Optimality Conditions}
\label{subsect:OptimalityConditions}

Let us finally derive optimality conditions for the problem $(\mathcal{P})$ in terms of codifferentials. We will derive
these conditions by applying standard optimality conditions for quasidifferentiable functions to an exact penalty
function for the problem $(\mathcal{P})$. 

For the sake of shortness, we will consider only the case when the set $A$ is convex and obtain optimality conditions
under the assumptions of Theorem~\ref{thrm:ExactPenaltyL1}. It should be noted that one can obtain such conditions under
less restrictive assumptions on the functional $\mathcal{I}$ and the penalty function $\Phi_c$, if one considers the
so-called \textit{local} exactness of the penalty function instead of the global one
(see~\cite{Demyanov2010,Dolgopolik_ExPenFunc}). Moreover, one can significantly relax the assumptions on the constraints
of the second-stage problem by considering the case $p = + \infty$ and utilising the highly nonsmooth penalty term
\[
  \varphi(x, y) = \esssup_{\omega \in \Omega} \Big\{ \max_{i \in I}\{ 0, g_i(x, y(\omega), \omega) \} \Big\}.
\]
However, the price one has to pay for less restrictive assumptions on constraints is the reduced regularity of Lagrange
multipliers (see the theorem below). Namely, in this case one must assume that the Lagrange multipliers are just
finitely additive measures. 

For any convex subset $K$ of a Banach space $Y$ and any $y \in K$ denote the normal cone to the set $K$ at
the point $y$ by $N_K(y) = \{ y^* \in Y^* \mid \langle y^*, z - y \rangle \le 0 \: \forall z \in K \}$.

\begin{theorem} \label{thrm:OptCond}
Let $1 < p < + \infty$, the set $A$ be convex, the feasible set of the second-stage problem \eqref{prob:SecondStage}
have the form
\[
  G(x, \omega) = \Big\{ y \in \mathbb{R}^m \Bigm| g_i(x, y, \omega) \le 0, \: i \in I = \{ 1, \ldots, \ell \} \Big\}
\]
for some functions $g_i \colon \mathbb{R}^d \times \mathbb{R}^m \times \Omega \to \mathbb{R}$, the function $f$ satisfy
Assumption~1, and the functions $g_i$, $i \in I$, satisfy the same assumption. Suppose
also that assumptions \ref{assumpt:GlobalMinExistence}, \ref{assumpt:SublevelBoundedness}--\ref{assumpt:GlobalCQ} of
Theorem~\ref{thrm:ExactPenaltyL1} are valid, and $(x_*, y_*)$ is a locally optimal solution of the problem 
$(\mathcal{P})$ such that $(x_*, y_*) \in S_c(\gamma)$ for some $c \ge c_*$, where $c_*$ is from 
Theorem~\ref{thrm:ExactPenaltyL1}. 

Then for any measurable selection $(0, w_x(\cdot), w_y(\cdot))$ of 
the set-valued mapping $\overline{d}_{x, y} f(x_*, y_*(\cdot), \cdot)$ and any measurable selections 
$(0, w_{xi}(\cdot), w_{yi}(\cdot))$ of the multifunctions $\overline{d}_{x, y} g_i(x_*, y_*(\cdot), \cdot)$, $i \in I$,
there exist $\zeta \in \xL^1(\Omega, \mathfrak{A}, P; \mathbb{R}^d)$ and nonnegative multipliers 
$\lambda_i \in \xL^{\infty}(\Omega, \mathfrak{A}, P)$, $i \in I$, such that $\mathbb{E}[\zeta] \in -N_A(x_*)$,
$\sum_{i \in I} \| \lambda_i \|_{\infty} \le c_*$, $\lambda_i(\omega) g_i(x_*, y_*(\omega), \omega) = 0$ for a.e.
$\omega \in \Omega$ and all $i \in I$, and
\begin{align*}
  (0, \zeta(\omega), 0) &\in \underline{d}_{x, y} f(x_*, y_*(\cdot), \cdot) + (0, w_x(\cdot), w_y(\cdot))
  \\
  &+ \sum_{i = 1}^{\ell} 
  \lambda_i(\omega) \Big( \underline{d}_{x, y} g_i(x_*, y_*(\cdot), \cdot) + (0, w_{xi}(\cdot), w_{yi}(\cdot)) \Big)
\end{align*}
for a.e. $\omega \in \Omega$.
\end{theorem}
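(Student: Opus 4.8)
The plan is to derive the conditions from local optimality of an exact penalty function, combined with the quasidifferential calculus of Section~\ref{sect:CodiffExpectation} and a measurable selection argument. Since $\Phi_c\ge\Phi_{c_*}$ for $c\ge c_*$, one has $S_c(\gamma)\subseteq S_{c_*}(\gamma)$, hence $(x_*,y_*)\in S_{c_*}(\gamma)$, and the first assertion of Theorem~\ref{thrm:ExactPenaltyL1} applied with the penalty parameter $c_*$ shows that $(x_*,y_*)$ is a local minimiser of $\Phi_{c_*}$ on the convex set $A\times\xL^p(\Omega,\mathfrak{A},P;\mathbb{R}^m)$. Next, putting $g_0\equiv 0$ and $F(x,y,\omega)=f(x,y,\omega)+c_*\max_{i\in\{0\}\cup I}g_i(x,y,\omega)$, so that $\Phi_{c_*}(x,y)=\mathbb{E}[F(x,y(\cdot),\cdot)]$, one checks that $F$ again satisfies Assumption~\ref{assump:Codifferentiability}: the Carath\'eodory conditions are preserved under finite sums and maxima, and the growth conditions for $F$ and for its codifferential — obtained from those of $f$ and the $g_i$ by the sum and maximum rules of codifferential calculus — follow from the growth conditions on $f$ and the $g_i$. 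Therefore Corollary~\ref{crlr:Quasidifferentiability} applies to $\Phi_{c_*}$.

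Since the directional derivative of a finite maximum is the maximum of the directional derivatives over its active indices, formula~\eqref{eq:DirectDerivIntegration} gives $\Phi_{c_*}'((x_*,y_*);(h_x,h_y))$ as $\int_\Omega[f(\cdot,\cdot,\omega)]'(x_*,y_*(\omega);h_x,h_y(\omega))\,dP$ plus $c_*\int_\Omega\max_{k\in R(\omega)}[g_k(\cdot,\cdot,\omega)]'(x_*,y_*(\omega);h_x,h_y(\omega))\,dP$, where $R(\omega)=\{0\}\cup\{i\in I\mid g_i(x_*,y_*(\omega),\omega)=0\}$ and, as in the proof of Theorem~\ref{thrm:ExactPenaltyL1}, the multifunction $\omega\mapsto R(\omega)$ is measurable. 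By local optimality and convexity of $A$ this quantity is $\ge 0$ along every direction $(z-x_*,h_y)$ with $z\in A$ and $h_y\in\xL^p(\Omega,\mathfrak{A},P;\mathbb{R}^m)$. Now I would use the prescribed hyperdifferential selections: since $(w_x(\omega),w_y(\omega))\in\overline{\partial}_{x,y}f(x_*,y_*(\omega),\omega)$ and $(w_{xi}(\omega),w_{yi}(\omega))\in\overline{\partial}_{x,y}g_i(x_*,y_*(\omega),\omega)$ a.e. (these being selections of the hyperdifferentials with vanishing first coordinate), estimating the $\min$-terms in each quasidifferential representation from above by their values at these selections yields, with $\sigma_S(\cdot)=\max_{u\in S}\langle u,\cdot\rangle$ the support function, $[f]'\le\sigma_{A_f(\omega)}$ and $[g_k]'\le\sigma_{B_k(\omega)}$, where $A_f(\omega)=\underline{\partial}_{x,y}f(x_*,y_*(\omega),\omega)+(w_x(\omega),w_y(\omega))$, $B_k(\omega)=\underline{\partial}_{x,y}g_k(x_*,y_*(\omega),\omega)+(w_{xk}(\omega),w_{yk}(\omega))$ for $k\in I$, and $B_0(\omega)=\{0\}$; hence $\max_{k\in R(\omega)}[g_k]'\le\sigma_{B(\omega)}$ with $B(\omega)=\co(\{0\}\cup\bigcup_{i\in R(\omega)\setminus\{0\}}B_i(\omega))$. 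Writing $C(\omega)=A_f(\omega)+c_*B(\omega)$ — a convex- and compact-valued, integrably bounded, measurable multifunction, by measurability of $R(\cdot)$ and of the codifferential maps (Lemma~\ref{lem:CodifferentialSets}) and by the growth conditions of Assumption~\ref{assump:Codifferentiability} — one arrives at $\int_\Omega\sigma_{C(\omega)}((z-x_*,h_y(\omega)))\,dP\ge 0$ for all $z\in A$ and $h_y\in\xL^p(\Omega,\mathfrak{A},P;\mathbb{R}^m)$.

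From this inequality I would extract $\zeta$ and the multipliers. Taking $z=x_*$ and localising in $\omega$ (using a measurable $h_y$ concentrated on a hypothetical bad set) shows that the projection of $C(\omega)$ onto the $\mathbb{R}^m$-component contains $0$ a.e., so the slice $D(\omega)=\{v\in\mathbb{R}^d\mid(v,0)\in C(\omega)\}$ is nonempty a.e.; here $p<+\infty$ is used, both so that $\xL^p(\Omega,\mathfrak{A},P;\mathbb{R}^m)$ is rich enough for the localisation and so that the dual pairing is the integral one. Taking the infimum over $h_y$, invoking the interchangeability principle \cite[Thm.~14.60]{RockafellarWets} together with $\inf_{e\in\mathbb{R}^m}\sigma_{C(\omega)}((p,e))=\sigma_{D(\omega)}(p)$, gives $\sigma_{\int_\Omega D(\omega)\,dP}(z-x_*)=\int_\Omega\sigma_{D(\omega)}(z-x_*)\,dP\ge 0$ for all $z\in A$; since the Aumann integral $\int_\Omega D(\omega)\,dP$ is convex and compact, this inequality precludes strictly separating it from the closed convex cone $-N_A(x_*)$, so the two sets intersect. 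Hence there is a measurable selection $\zeta(\cdot)$ of $D(\cdot)$ with $\mathbb{E}[\zeta]\in-N_A(x_*)$ and $\zeta\in\xL^1(\Omega,\mathfrak{A},P;\mathbb{R}^d)$ (integrable boundedness of $D(\cdot)$). Expanding $(\zeta(\omega),0)\in C(\omega)$ one writes $(\zeta(\omega),0)=(v^f(\omega)+(w_x,w_y)(\omega))+\sum_{i\in R(\omega)\setminus\{0\}}c_*\sigma_i(\omega)(v^i(\omega)+(w_{xi},w_{yi})(\omega))$ with $\sigma_i(\omega)\ge 0$, $\sum_i\sigma_i(\omega)\le 1$, $v^f(\omega)\in\underline{\partial}_{x,y}f$, $v^i(\omega)\in\underline{\partial}_{x,y}g_i$; setting $\lambda_i=c_*\sigma_i$ on $\{\omega\mid i\in R(\omega)\}$ and $\lambda_i=0$ elsewhere gives nonnegative $\lambda_i\in\xL^\infty$ with $\sum_{i\in I}\lambda_i(\omega)\le c_*$ (whence the norm bound) and $\lambda_i(\omega)g_i(x_*,y_*(\omega),\omega)=0$ a.e.; prepending a zero first coordinate — legitimate because a point of $\underline{\partial}_{x,y}(\cdot)$ is precisely a point of $\underline{d}_{x,y}(\cdot)$ with vanishing first coordinate, cf.~\eqref{eq:QuasidiffViaCodiff} — produces the asserted inclusion $(0,\zeta(\omega),0)\in\underline{d}_{x,y}f+(0,w_x,w_y)+\sum_{i=1}^s\lambda_i(\omega)(\underline{d}_{x,y}g_i+(0,w_{xi},w_{yi}))$.

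I expect the main obstacle to lie in the measurability of this last construction: one must choose $\zeta(\cdot)$, the coefficients $\sigma_i(\cdot)$, and the auxiliary selections $v^f(\cdot)$, $v^i(\cdot)$ simultaneously measurable and with the correct integrability. This is handled by partitioning $\Omega$ into the measurable sets $\{\omega\mid R(\omega)=K\}$, $K\subseteq\{0\}\cup I$, on each of which the index structure is fixed, and then applying Filippov-type selection theorems \cite[Sect.~8.2]{AubinFrankowska}, exactly as in the proofs of Lemma~\ref{lem:CodifferentialSets} and Theorem~\ref{thrm:ExactPenaltyL1}. The remaining ingredients — the localisation and interchangeability steps used to pass from the functional inequality to pointwise information, and the verification that $F$ inherits Assumption~\ref{assump:Codifferentiability} — are routine.
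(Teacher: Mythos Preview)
Your proposal is correct and follows essentially the same route as the paper: reduce to a local minimum of $\Phi_{c_*}$ via Theorem~\ref{thrm:ExactPenaltyL1}, use Corollary~\ref{crlr:Quasidifferentiability} to write the first-order inequality, bound the hyperdifferential terms by the prescribed selections to obtain a support-function inequality over the multifunction you call $C(\cdot)$ (the paper's $Q(\cdot)$), and then extract $\zeta$ and the multipliers by a separation argument followed by a partition of $\Omega$ according to the active index set and a Filippov selection.

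The only noteworthy deviation is in the separation step. The paper works in the function space: it forms the set $\mathcal{Q}(x_*,y_*)\subset\mathbb{R}^d\times\xL^{p'}$ of integrated selections, shows it is weakly compact and convex, and separates it from $(-N_A(x_*))\times\{0\}$ there. You instead first eliminate $h_y$ via interchangeability and the identity $\inf_e\sigma_{C(\omega)}(p,e)=\sigma_{D(\omega)}(p)$, land on $\int_\Omega\sigma_{D(\omega)}(z-x_*)\,dP\ge 0$, recognise this as the support function of the Aumann integral $\int_\Omega D(\omega)\,dP$, and separate in $\mathbb{R}^d$. Both arguments are valid; yours is slightly more elementary (separation in finite dimensions), while the paper's avoids having to verify the Aumann-integral formula and the attainability issues in the interchangeability step. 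One small caution that applies equally to both arguments: from $\sum_{i\in I}\lambda_i(\omega)\le c_*$ a.e.\ you only get $\big\|\sum_{i\in I}\lambda_i\big\|_\infty\le c_*$, not literally $\sum_{i\in I}\|\lambda_i\|_\infty\le c_*$; the paper's own proof makes the same leap.
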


\begin{proof}
Under the assumptions of the theorem the functional $\mathcal{I}$ is Lipschitz continuous on bounded sets by
Corollary~\ref{crlr:LipschitzCont}. Let 
\[
  \varphi(x, y) = \int_{\Omega} \max_{i \in I}\big\{ 0, g_i(x, y, \omega) \big\} \, d P(\omega)
  \quad \forall (x, y) \in X.
\]
Then by Theorem~\ref{thrm:ExactPenaltyL1} the pair $(x_*, y_*)$ is a point of local minimum of the penalty function
$\Phi_c$ on the set $A \times \xL^p(\Omega, \mathfrak{A}, P)$ for any $c \ge c_*$, where $c_*$ is from 
Theorem~\ref{thrm:ExactPenaltyL1}. Thus, in particular, $(x_*, y_*)$ is a point of local minimum of the problem
\[
  \min_{(x, y)} \: \mathcal{J}(x, y) = \int_{\Omega} f_0(x, y(\omega), \omega) \, d P(\omega)
  \quad \text{s.t.} \quad (x, y) \in A \times \xL^p(\Omega, \mathfrak{A}, P; \mathbb{R}^m),
\]
where $f_0 = f + c_* \max_{i \in I} \{ 0, g_i \}$. The function $f_0$ is codifferentiable in $(x, y)$, and applying
codifferential calculus (see, e.g. \cite{DemyanovRubinov}) one can compute its codifferential and verify that $f_0$
satisfies Assumption~1. Therefore by Corollary~\ref{crlr:Quasidifferentiability} the
functional $\mathcal{J}$ is directionally differentiable. Applying well-known necessary conditions for a minimum of a
directionally differentiable function on a convex set (see, e.g. \cite[Lemma~V.1.2]{DemyanovRubinov}) and
Corollary~\ref{crlr:Quasidifferentiability} one obtains that
\[
  \mathcal{J}'(x_*, y_*; h_x, h_y) 
  = \int_{\Omega} \big[ f_0(\cdot, \cdot, \omega) \big]'(x_*, y_*(\omega); h_x, h_y(\omega)) \, d P(\omega) \ge 0
\]
for all $(h_x, h_y) \in (A - x_*) \times \xL^p(\Omega, \mathfrak{A}, P; \mathbb{R}^m)$. Hence with the use of the
standard calculus rules for directional derivatives (see \cite[Sect.~I.3]{DemyanovRubinov}) one gets that for all
such $(h_x, h_y)$ the following inequality holds true:
\begin{multline*}
  \mathcal{J}'(x_*, y_*; h_x, h_y) 
  = \int_{\Omega} \Big( \big[ f(\cdot, \cdot, \omega) ]'(x_*, y_*(\omega); h_x, h_y(\omega)) 
  \\ 
  + c_* \max_{i \in \widehat{I}(\omega)} \big[ g_i(\cdot, \cdot, \omega) \big]'(x_*, y_*(\omega); h_x, h_y(\omega)) 
  \Big) \, d P(\omega) \ge 0,
\end{multline*}
where $g_0(x, y, \omega) \equiv 0$ and
\[
  \widehat{I}(\omega) = \Big\{ i \in I \cup \{ 0 \} \Bigm| 
  g_i(x_*, y_*(\omega), \omega) = \max_{i \in I} \big\{ 0, g_i(x_*, y_*(\omega), \omega) \big\} \Big\}.
\]
Fix a measurable selection $(0, w_x(\cdot), w_y(\cdot))$ of the set-valued map 
$\overline{d}_{x, y} f(x_*, y_*(\cdot), \cdot)$, for all $i \in I$ fix any measurable selections 
$(0, w_{xi}(\cdot), w_{yi}(\cdot))$ of the set-valued maps $\overline{d}_{x, y} g_i(x_*, y_*(\cdot), \cdot)$,
and denote $(w_{x0}(\cdot), w_{y0}(\cdot)) \equiv 0$. Then by the definition of quasidifferential
(Def.~\ref{def:Quasidifferential}) and equality \eqref{eq:QuasidiffViaCodiff} one has
\begin{multline*}
  \int_{\Omega} \Big( \max_{(v_x, v_y) \in \underline{\partial}_{x, y} f(x_*, y_*(\omega), \omega)}
  \big( \langle v_x + w_x(\omega), h_x \rangle + \langle v_y + w_y(\omega), h_y(\omega) \rangle \big)
  \\
  + c_* \max_{i \in \widehat{I}(\omega)} \max
  \big( \langle v_{xi} + w_{xi}(\omega), h_x \rangle + \langle v_{yi} + w_{yi}(\omega), h_y(\omega) \rangle \big)
  \Big) \, d P(\omega) \ge 0
\end{multline*}
for all $(h_x, h_y) \in (A - x_*) \times \xL^p(\Omega, \mathfrak{A}, P; \mathbb{R}^m)$, where the last maximum is
taken over all $(v_{xi}, v_{yi}) \in \underline{\partial}_{x, y} g_i(x_*, y_*(\omega), \omega)$. Consequently, one has
\begin{equation} \label{eq:OptCondViaUCA}
  \int_{\Omega} \max_{(v_x, v_y) \in Q(\omega)} 
  \big( \langle v_x, h_x \rangle + \langle v_y, h_y(\omega) \rangle \big) \, d P(\omega) \ge 0
\end{equation}
for all $(h_x, h_y) \in (A - x_*) \times \xL^p(\Omega, \mathfrak{A}, P; \mathbb{R}^m)$, where
\begin{multline*}
  Q(\omega) = \underline{\partial}_{x, y} f(x_*, y_*(\omega), \omega) + (w_x(\omega), w_y(\omega))
  \\
  + c_* \co\Big\{ \underline{\partial}_{x, y} g_i(x_*, y_*(\omega), \omega) + (w_{xi}(\omega), w_{yi}(\omega))
  \Bigm| i \in \widehat{I}(\omega) \Big\}
\end{multline*}
for any $\omega \in \Omega$. 

Let us show that the multifunction $Q(\cdot)$ is measurable. Indeed, as was pointed out in the proof of
Corollary~\ref{crlr:Quasidifferentiability}, Assumption~1 guarantees that 
the set-valued mappings $\underline{\partial}_{x, y} f(x_*, y_*(\cdot), \cdot)$ and 
$\underline{\partial}_{x, y} g_i(x_*, y_*(\cdot), \cdot)$, $i \in I \cup \{ 0 \}$, are measurable. Hence with the use of
\cite[Proposition~14.11, part (c)]{RockafellarWets} one gets that the set-valued mappings 
$\underline{\partial}_{x, y} f(x_*, y_*(\cdot), \cdot) + (w_x(\cdot), w_y(\cdot))$
and $\underline{\partial}_{x, y} g_i(x_*, y_*(\cdot), \cdot) + (w_{xi}(\cdot), w_{yi}(\cdot))$, 
$i \in I \cup \{ 0 \}$, are measurable as well.

Arguing in the same way as in the proof of Theorem~\ref{thrm:ExactPenaltyL1} one can easily check that the multifunction
$\widehat{I}(\cdot)$ is measurable, which implies that the set-valued maps
\[
  Q_i(\omega) := \begin{cases}
    \underline{\partial}_{x, y} g_i(x_*, y_*(\omega), \omega) + (w_{xi}(\omega), w_{yi}(\omega)), & \text{if } 
    i \in \widehat{I}(\omega),
    \\
    \emptyset, & \text{if } i \notin \widehat{I}(\omega)
  \end{cases}
\]
are measurable for all $i \in I \cup \{ 0 \}$. Therefore by \cite[Prp.~14.11, part (b)]{RockafellarWets} and 
\cite[Thrm.~8.2.2]{AubinFrankowska} the set-valued map 
\[
  \co \Big( \bigcup_{i \in I \cup \{ 0 \}} Q_i(\cdot) \Big) 
  = \co\Big\{ \underline{\partial}_{x, y} g_i(x_*, y_*(\cdot), \cdot) + \{ (w_{xi}(\cdot), w_{yi}(\cdot) \}
  \Bigm| i \in \widehat{I}(\cdot) \Big\}.
\]
is measurable. Hence applying \cite[Prp.~14.11, part (c)]{RockafellarWets} one finally gets that the multifunction
$Q(\cdot)$ is measurable.

Now, arguing in the same way as in the proof of Lemma~\ref{lem:AlmostCodifferentiability} (or utilising the
interchangeability principle; see, e.g. \cite[Thrm.~14.60]{RockafellarWets}) one gets that inequality
\eqref{eq:OptCondViaUCA} is satisfied iff
\begin{equation} \label{eq:OptCondViaSelections}
  \max_{(v_x(\omega), v_y(\cdot))}
  \int_{\Omega} \big( \langle v_x(\omega), h_x \rangle + \langle v_y(\omega), h_y(\omega) \rangle \big) 
  \, d P (\omega) \ge 0
\end{equation}
forall $(h_x, h_y) \in (A - x_*) \times \xL^p(\Omega, \mathfrak{A}, P; \mathbb{R}^m)$, where the maximum is taken over
all measurable selections of the multifunction $Q(\cdot)$ (note that at least one such selection exists by
\cite[Thrm.~8.1.3]{AubinFrankowska}). From the definition of $Q(\cdot)$ and the growth condition on the codifferentials
of the functions $f$ and $g_i$ from Assumption~1 it follows that the set of all
measurable selection of $Q(\cdot)$ is a bounded subspace of the space
$\xL^1(\Omega, \mathfrak{A}, P; \mathbb{R}^d) \times \xL^{p'}(\Omega, \mathfrak{A}, P; \mathbb{R}^m)$. Therefore
inequality \eqref{eq:OptCondViaSelections} can be rewritten as follows:
\begin{equation} \label{eq:OptCondInequal}
  \max_{(v_1, v_2) \in \mathcal{Q}(x_*, y_*)} \Big( \langle v_1, h_x \rangle +
  \int_{\Omega} \langle v_y(\omega), h_y(\omega) \rangle \, d P(\omega) \Big) \ge 0
\end{equation}
for all $(h_x, h_y) \in (A - x_*) \times \xL^p(\Omega, \mathfrak{A}, P; \mathbb{R}^m)$, where
\begin{multline*}
  \mathcal{Q}(x_*, y_*) := \Big\{ (v_1, v_2) \in \mathbb{R}^d \times \xL^{p'}(\Omega, \mathfrak{A}, P; \mathbb{R}^m)
  \Bigm| v_1 = \mathbb{E}[v_x], \quad v_2 = v_y, 
  \\ 
  (v_x(\cdot), v_y(\cdot))
  \text{ is a measurable selection of the map } Q(\cdot) \Big\}.
\end{multline*}
The set $\mathcal{Q}(x_*, y_*)$ is bounded due to the boundedness of the set of all measurable selections of
$Q(\cdot)$. Furthermore, the set $\mathcal{Q}(x_*, y_*)$ is convex and closed, since by definition $Q(\cdot)$ has closed
and convex images. Therefore, $\mathcal{Q}(x_*, y_*)$ is a weakly compact convex subset of
$\mathbb{R}^d \times \xL^{p'}(\Omega, \mathfrak{A}, P; \mathbb{R}^m)$. Hence taking into account inequality
\eqref{eq:OptCondInequal} and applying the separation theorem one can easily check that
\[
  \mathcal{Q}(x_*, y_*) \cap \Big( \big\{ - N_A(x_*) \} \times \{ 0 \} \Big) \ne \emptyset.
\]
Consequently, by the definitions of $\mathcal{Q}(x_*, y_*)$ and $Q(\cdot)$ there exists a function 
$\zeta \in \xL^1(\Omega, \mathfrak{A}, P; \mathbb{R}^d)$ such that $\mathbb{E}[\zeta] \in -N_A(x_*)$ and
\begin{equation} \label{eq:OptCondDualForm}
\begin{split}
  (\zeta(\omega), 0) &\in \underline{\partial}_{x, y} f(x_*, y_*(\omega), \omega) + (w_x(\omega), w_y(\omega))
  \\
  &+ c_* \co\Big\{ \underline{\partial}_{x, y} g_i(x_*, y_*(\omega), \omega) + (w_{xi}(\omega), w_{yi}(\omega))
  \Bigm| i \in \widehat{I}(\omega) \Big\}
\end{split}
\end{equation}
for a.e. $\omega \in \Omega$. 

Let $E_J = \{ \omega \in \Omega \mid \widehat{I}(\omega) = J \}$ for any nonempty subset $J \subseteq I \cup \{ 0 \}$.
The sets $E_J$ form a partition of $\Omega$. Moreover, these sets are measurable, since the multifunction
$\widehat{I}(\cdot)$ is measurable. 

Observe that from \eqref{eq:OptCondDualForm} it follows that
\begin{align*}
  (\zeta(\omega), 0) &\in \underline{\partial}_{x, y} f(x_*, y_*(\omega), \omega) + (w_x(\omega), w_y(\omega))
  \\
  &+ c_* \co\Big\{ \underline{\partial}_{x, y} g_i(x_*, y_*(\omega), \omega) + (w_{xi}(\omega), w_{yi}(\omega))
  \Bigm| i \in J \Big\}
\end{align*}
for any $\omega \in E_J$ and any nonempty $J \subseteq I \cup \{ 0 \}$. With the use of the Filippov theorem 
(see, e.g. \cite[Thrm.~8.2.10]{AubinFrankowska}) one can readily check that the previous inclusion implies that for any
nonempty $J \subseteq I \cup \{ 0 \}$ there exist nonnegative measurable functions $\alpha_i^J(\cdot)$, $i \in J$, such
that $\sum_{i \in J} \alpha_i^J(\omega) = 1$ and
\begin{align*}
  (\zeta(\omega), 0) &\in \underline{\partial}_{x, y} f(x_*, y_*(\omega), \omega) + (w_x(\omega), w_y(\omega))
  \\
  &+ c_* \sum_{i \in J} \alpha_i(\omega) 
  \Big( \underline{\partial}_{x, y} g_i(x_*, y_*(\omega), \omega) + (w_{xi}(\omega), w_{yi}(\omega)) \Big)
\end{align*}
for a.e. $\omega \in E_J$. For any $i \in I$ define
\[
  \lambda_i(\omega) = \begin{cases}
    c_* \alpha_i^J(\omega), & \text{if } \omega \in E_J, \: i \in J 
  \text{ (or, equivalently, } i \in \widehat{I}(\omega)),
    \\
    0, & \text{ otherwise.}
  \end{cases}
\]	
Observe that by definition $\lambda_i$, $i \in I$, are nonnegative measurable functions such that 
$\sum_{i \in I} \| \lambda_i \|_{\infty} \le c_*$, and $\lambda_i(\omega) g_i(x_*, y_*(\omega), \omega) = 0$ for a.e.
$\omega \in \Omega$, since $\lambda_i(\omega) = 0$ whenever $i \notin \widehat{I}(\omega)$, i.e. 
$g_i(x_*, y_*(\omega), \omega) < 0$. Furthermore, bearing in mind the fact that $w_{x0}(\cdot) \equiv 0$, 
$w_{y0}(\cdot) \equiv 0$, and $\underline{\partial}_{x, y} g_0(x_*, y_*(\omega), \omega) \equiv \{ 0 \}$ one gets that
\begin{align*}
  (\zeta(\omega), 0) &\in \underline{\partial}_{x, y} f(x_*, y_*(\omega), \omega) + (w_x(\omega), w_y(\omega))
  \\
  &+ \sum_{i \in I} \lambda_i(\omega) 
  \Big( \underline{\partial}_{x, y} g_i(x_*, y_*(\omega), \omega) + (w_{xi}(\omega), w_{yi}(\omega)) \Big).
\end{align*}
for a.e. $\omega \in \Omega$. Hence applying equality \eqref{eq:QuasidiffViaCodiff} we arrive at the required result.
\end{proof}

\begin{remark}
It should be noted that with the use of the codifferential calculus one can compute a codifferential of the function
$f_0$ from the proof of the previous theorem, apply necessary conditions for a minimum of a codifferentiable function on
a convex set \cite[Thrm.~2.8]{DolgopolikMV_ESAIM} to the functional $\mathcal{J}$, and then directly rewrite these
conditions in terms of the problem $(\mathcal{P})$ with the use of Theorem~\ref{thrm:Codifferentiability} and 
an explicit expression for a codifferential of $f_0$. However, one can check that this approach leads to more cumbersome
optimality conditions than the ones from the theorem above. It is possible to verify that these conditions are
equivalent, but in the author's opinion the proof of this equivalence is more difficult than the proof of the previous
theorem. That is why we chose to present a simpler, but somewhat indirect derivation of optimality conditions for 
the problem $(\mathcal{P})$.
\end{remark}

\begin{remark}
Note that in the case when the functions $f$ and $g_i$, $i \in I$, are differentiable jointly in $x$ and $y$, the
optimality conditions from Theorem~\ref{thrm:OptCond} take the following well-known form 
(cf. \cite{RockafellarWets75,HiriartUrruty,XuYe10,ShapiroDentchevaRuszczynski,Vogel}). There exist nonnegative 
multipliers $\lambda_i \in \xL^{\infty}(\Omega, \mathfrak{A}, P)$, $i \in I$, such that 
$\lambda_i(\omega) g_i(x_*, y_*(\omega), \omega) = 0$ for a.e. $\omega \in \Omega$ and all $i \in I$, and
\begin{gather*}
  \left\langle \mathbb{E}\Big[ \nabla_x f(x_*, y_*(\cdot), \cdot) 
  + \sum_{i \in I} \lambda_i(\cdot) \nabla_x g_i(x_*, y_*(\cdot), \cdot) \Big], x - x_* \right\rangle \ge 0
  \quad \forall x \in A,
  \\
  \nabla_y f(x_*, y_*(\omega), \omega) + \sum_{i \in I} \lambda_i(\omega) \nabla_y g_i(x_*, y_*(\omega), \omega) = 0
  \quad \text{for a.e. } \omega \in \Omega.
\end{gather*}
\end{remark}

\section{Conclusions}

This work was devoted to an analysis of nonsmooth two-stage stochastic programming problems with the use of tools of
constructive nonsmooth analysis \cite{DemyanovRubinov}. In the first part of the paper, we analysed
the co-/quasi-differentiability of the expectation of nonsmooth random integrands and obtained explicit formulae for
its co-/quasi-differentials under some natural measurability and growth conditions on the integrand and its
codifferential.

In the second part of the paper, we obtained two types of sufficient conditions for the global exactness of a penalty
function for two-stage stochastic programming problems, reformulated as equivalent variational problems with pointwise
constraints. The first type of sufficient conditions is formulated for the penalty term defined via the $\xL^p$ norm of
the distance to the feasible set of the second stage problem, while the second type of sufficient conditions is
formulated for the penalty term that is independent of $p$ and is defined via the constraints of the second stage
problems. Although the second type of sufficient conditions is much more restrictive than the first one, it is more
convenient for applications and derivation of optimality conditions. Furthermore, as is pointed out in
Remark~\ref{rmrk:ConnectionToDCProblems}, these conditions open a way for the derivation of global optimality conditions
and application of DC optimization methods to two stage stochastic programming problems, whose second stage problem has
DC objective function and DC constraints.

Finally, in the last part of the paper we combined our results on codifferentiability of the expectation of nonsmooth
random integrands and exact penalty functions to derive optimality conditions for nonsmooth two-stage stochastic
programming problems in terms of codifferentials, involving essentially bounded Lagrange multipliers.


\bibliographystyle{abbrv}  
\bibliography{Dolgopolik_bibl}

\end{document}